\documentclass[10pt, reqno]{amsart} % Font size, right-side equation labels, ams article format

\usepackage[utf8]{inputenc} % utf8 encoding
\usepackage{graphicx} % Required for inserting images
\graphicspath{{./images/}}
\usepackage{amsopn,amssymb, amsmath, mathtools, amsthm, mathrsfs,xcolor,bm,url,enumitem} % Extra packages
\usepackage{dsfont}
\usepackage[a4paper,lmargin=3cm,rmargin=3cm,tmargin=4cm,bmargin=4cm,marginparwidth=2.8cm,marginparsep=1mm]{geometry} % Reduce side margins a bit
\usepackage[bookmarks=true,hyperindex,pdftex,colorlinks,citecolor=blue]{hyperref} % Hyperlinks to citations, eqrefs and refs 
\usepackage{cleveref}
\usepackage{datetime} % Custom date format commands
\usepackage{tabularx, ragged2e} % Table fixes
\usepackage[colorinlistoftodos]{todonotes}
\usepackage{thmtools, thm-restate}

\usepackage[backend=biber, giveninits=true]{biblatex}
\usepackage{xcolor}
\usepackage{color}
\usepackage{soul}

\parskip\medskipamount  % Paragraph skip size

\newdateformat{monthyeardate}{\monthname[\THEMONTH], \THEYEAR} % date format command for month, year without date
\newdateformat{monthdayyeardate}{\monthname[\THEMONTH] \THEDAY, \THEYEAR}

\newtheorem{theorem}{Theorem}

\newtheorem{lemma}{Lemma}[section]

\newtheorem{proposition}[lemma]{Proposition}

\theoremstyle{definition}

\numberwithin{equation}{section}

\newcolumntype{C}{>{\Centering\arraybackslash}X} % tabularx centered "X" column

\newcommand{\R}[0]{\mathbb{R}} % reals
\newcommand{\N}[0]{\mathbb{N}} % naturals
\newcommand{\Z}[0]{\mathbb{Z}} % integers
\newcommand{\A}[0]{\mathcal{A}}

\newcommand{\E}[0]{\mathcal{E}}

\newcommand{\ep}{\varepsilon} % Epsilon
\newcommand{\ind}[1]{\ensuremath{\mbox{$\mathds{1}$}_{#1}}} % Indicator
\newcommand{\lint}[4]{\ensuremath{\int_{#1}^{#2}{#3}\:\mathrm{d}{#4}}} % Integral
\newcommand{\map}[3]{\ensuremath{{#1}:{#2}\to{#3}}} % Function f: X -> Y
\newcommand{\me}{\ensuremath{\mathrm{e}}} % Non-italic e, for exponential function
\newcommand{\pn}[2]{\ensuremath{\left\|{#1}\right\|_{#2}}} % p-norm
\newcommand{\set}[2]{\ensuremath{\left\{{#1}\;:\;\,{#2}\right\}}} % Set notation

\DeclareMathOperator{\lcm}{lcm} % Lowest common multiple
\DeclareMathOperator{\supp}{supp}
\DeclareMathOperator{\Log}{Log}

\title{Quantitative results on the \texorpdfstring{$k$}{k}-dimensional Duffin-Schaeffer conjecture}

\author[C. O'Reilly]{Connor O'Reilly}
\address[C. O'Reilly]{Norwegian University of Science and Technology, Department of Mathematical Sciences, 7491 Trondheim, Norway}
\email{connoro@ntnu.no}

\date{\monthyeardate\today} % Month year

\begin{document}

\begin{abstract}
    For all $k\geq 2$, we provide almost-sharp quantitative results for the $k$-dimensional Duffin-Schaeffer conjecture, analogous to recent developments in the 1-D case of Koukoulopoulos-Maynard-Yang. In particular, for $\map{\psi}{\N}{[0,1/2]}$ such that $\sum_{q\in \N}(\psi(q)\varphi(q)/q)^k$ diverges, $Q\geq 1$ and $\alpha\in\R$, we denote by $S_k(\alpha, Q)$ the number of pairs $(a,q)\in\Z^k\times \N$ with $q\leq Q$, $\gcd(a_i,q)=1$ for each $i\in\{1,\dots,k\}$, satisfying $\pn{q\alpha-a}{\infty}<\psi(q)$. Defining $\Psi_k(Q)=\sum_{q\leq Q}(2\psi(q)\varphi(q)/q)^k$, we show that for all $\ep>0$ and almost all $\alpha$ one has $S_k(\alpha,Q)=\Psi_k(Q)+O_{\ep,k}(\Psi(Q)^{1/2+\ep})$.
\end{abstract}

\keywords{Diophantine approximation, metric number theory, Duffin-Schaeffer conjecture, simultaneous approximation}
\subjclass{11J83, 11J13, 11K60}
\maketitle

% \todoCOR[]{To do: fix typographical orphans, widows (these are real terms) etc., and where sections start on pages (some of this is broken by the todo margins..)}
% --------------------
\section{Introduction}
% \todoMH[inline]{I would state both Dirichlet and Khintchine in arbitrary dimension, both is true - since we treat the higher-dimensional one, this makes more sense.}

Simultaneous Diophantine approximation concerns the existence of rational approximations of multidimensional irrational points: a classical result of Dirichlet states that, given $k\geq 1$, for every $\alpha\in \R^k$, there are infinitely many solutions $(a,q)\in\Z^k\times \N$ to the inequality \cite[Chapter~2]{schmidt1980diophantine}:
\begin{align*}
    \pn{q\alpha - a}{\infty}<q^{-1/k}.
\end{align*}
% \todoMH[inline]{Should be $q^{-1/k}$}
Moreover, this bound is optimal up to a constant for numbers with bounded partial quotients when represented as continued fractions; this class of numbers are known as badly approximable, and contains e.g. for $k=1$ the golden ratio $\varphi$, and $\sqrt{2}$. However, for typical $\alpha$, in the sense of Lebesgue measure, it turns out that this bound can be significantly improved: metric Diophantine approximation asks the question of by exactly how much. For an overview of metric Diophantine approximation (and metric number theory, in general), one should read the monograph \cite{harman1998metricnumbertheory}. 

A notable foundational result of metric Diophantine approximation is Khintchine's theorem: given a function $\map{\psi}{\N}{[0,\infty)}$ monotonically decreasing, Khintchine's theorem states that for almost every $\alpha\in \R^k$, there exists infinitely many pairs $(a,q)\in\Z^k\times \N$ satisfying the inequality:
\begin{align*}
    \pn{q\alpha-a}{\infty}< \psi(q),
\end{align*}
if and only if $\sum_{q\in\N}(\psi(q))^k$ diverges \cite[Section~3]{beresnevich2009classical}. Intuitively, one can choose $\psi(q)=1/q\log q$ to see that on average the bound from Dirichlet's theorem can be improved by a logarithmic factor. In the case of only finitely many solutions, Khintchine's theorem is immediate from the convergence Borel-Cantelli lemma. The divergence lemma, however, can not be applied directly as there is no stochastic independence, but when $\psi$ is monotonic, the set system under consideration behaves approximately independent in a sense that is sufficient for the statement to hold.
% \todoMH[inline]{I would rephrase the above a bit: Even monotonicity does not give you ``independence'' in a stochastical sense as it is necessary for the divergence BC. I would suggest something like ``divergence BC can't be applied'' since no stochastical independence. for monotonic $\psi$, the sets under consideration behave ``independently enough'' - this is of course vague, but maybe still fine for an intro. Just as information to you: Actually, the set system even in the monotonic case in dimension one does not satisfy the 2nd moment bound necessary. Khintchine original proof worked differently, and another method is in Khintchine's 1926 higher-dimensional paper where he just considers reduced fractions, where such independence can be found (which is all clearer in the work of Duffin and Schaeffer). This note does not need to be mentioned, but keep that in mind so that you don't state something wrong.}

In 1941, Duffin and Schaeffer \cite{duffin1941khintchinesproblem} showed that Khintchine's theorem in 1-dimension fails without the monotonicity condition, by constructing a counterexample where the series $\sum_{q\in \N} \psi(q)$ diverges, yet, for almost every $\alpha$, only finitely many solutions exist. Their work relies on the multiple representations of fractions: if $p/q$ is a close approximation for $\alpha$, then so is $np/nq$ for all $n\in\N$. As a result, if the function $\psi$ is constructed to have values disproportionately high on integer multiples of $q$, the sum may still diverge, despite the $\alpha$ admitting good approximations being relatively rare.
% \todoMH[inline]{Exploit the independence condition.. I don't understand that, probably you meant something different.}
% \todoCOR[inline]{I've reread the Duffin-Schaeffer 1941 paper and I hope this is better?}
% \todoMH[inline]{Sounds good now!}
This led them to conjecture that a variant of Khintchine's theorem would hold when non-reduced fractions are excluded, as follows: denoting by $\varphi$ the Euler totient function, if $\map{\psi}{\N}{[0,\infty)}$ is a function such that $\sum_{q\in\N}\psi(q)\varphi(q)/q$ diverges, then for almost all $\alpha\in\R$, there exists infinitely many solutions to the inequality:
\begin{align}
    \left\lvert \alpha - \frac{p}{q}\right\rvert < \frac{\psi(q)}{q}.\label{eq:approx-ineq}
\end{align}
The Duffin-Schaeffer conjecture was open for 78 years before it was proven by Koukoulopoulos and Maynard \cite{koukoulopoulos2020duffin}. A full survey on the conjecture and an exposition of its proof can be found in \cite{hauke2025centurymetricdiophantineapproximation}.
% \todoMH[inline]{Since you are the native I doubt myself, but I would have said it's a survey ''on'' the conjecture?}
% \todoCOR[inline]{I believe both are correct (one would do a survey on a topic, but a survey of an area (location), as with a survey of n people, so with an area (topic) it's somewhat ambiguous). But I can use ``on'' if this is more common}
% \todoMH[inline]{Okay good to know!}

The work of \cite{koukoulopoulos2020duffin}, however, left open the question of quantitative results on the number of approximations, assuming some bound on $q$. A quantitative version of Khintchine's theorem was proven by Schmidt in 1960 \cite{schmidt1960ametricaltheorem}; one can also view \cite[Chapter~4]{harman1998metricnumbertheory} for an overview of Schmidt's method. Schmidt-type results on the Duffin-Schaeffer conjecture have been the subject of various recent works \cite{aistleitner2023metrictheoryofapproximations, hauke2024provingduffinschaefferconjecturegcd}, with the sharpest known being of Koukoulopoulos, Maynard and Yang \cite{koukoulopoulos2024sharpquantitativeversionduffinschaeffer}. 
% \todoMH[inline]{Many people in metric d.a. know these quantitative version as "Schmidt-type results", you can call them like that here if you want to.}
% \todoMH[inline]{Maybe some smoother transition here like ''to set the scene'' or whatever so that the reader somehow understand that you now explain what is meant with "quantitative"}
Towards this quantitative question, given $\map{\psi}{\N}{[0,1/2]}$ and $Q\geq 1$, we define:
\begin{align*}
    \Psi(Q) \coloneq \sum_{q\leq Q}\frac{2\psi(q)\varphi(q)}{q}.
\end{align*}
Moreover, denote by $S(\alpha,Q)$ the number of pairs $(p,q)\in \Z \times \N$ with $q\leq Q,\ \gcd(p,q)=1$ that satisfy the inequality \eqref{eq:approx-ineq}. The work of Koukoulopoulos, Maynard and Yang \cite{koukoulopoulos2024sharpquantitativeversionduffinschaeffer} then states that, assuming $\lim_{Q\to\infty}\Psi(Q)=\infty$, for every $\ep>0$ and almost all $\alpha\in\R$, we have as $Q\to \infty$:
\begin{align*}
    S(\alpha,Q)=\Psi(Q)+O_\ep\left(\Psi(Q)^{\frac{1}{2}+\ep}\right).
\end{align*}

% \todoMH[inline]{I would suggest "in this paper we consider" instead, sounds clearer and is more confident. $\checkmark$}
In this paper, we consider a higher-dimensional generalisation of the Duffin-Schaeffer conjecture as follows: for $k\geq 1$, given $\map{\psi}{\N}{[0,\infty)}$ and $\alpha\in \R^k$, we ask whether there exists infinitely many solutions $(a,q)\in\Z^k\times \N$ satisfying the inequality:
\begin{align}
    \pn{q\alpha-a}{\infty}< \psi(q),\label{eq:kdim-approx-ineq}
\end{align}
where $a=(a_1,\dots,a_k)$ satisfies $\gcd(a_i, q)=1$ for $i\in\{1,\dots, k\}$. When $k=1$, this is then exactly the question of the Duffin-Schaeffer conjecture. For $k\geq 2$, the question was first raised by Sprind\v{z}uk \cite{sprindzhuk1979metric}, and then resolved by Pollington and Vaughan \cite{pollington1990kdimensionalduffinschaeffer} as follows: assuming the notation and hypothesis above, there exists infinitely many solutions to the inequality \eqref{eq:kdim-approx-ineq} if and only if the sum $\sum_{q\in\N}(\psi(q)\varphi(q)/q)^k$ diverges.
% \todoMH[inline]{There is of course no real translation of the belarussian name, but I usually see (and use) Sprind\v{z}uk}\todoCOR[inline]{Sure. I've seen it spelled differently in different places; same with Khintchine/Khinchin, so never sure which to take..}
% \todoMH[inline]{That's always with the russian names since there is no unique transcription. But as long as you use it consistently, its fine.}

Following the recent progress in the 1-dimensional case, we now ask the question of quantitative results for the higher-dimensional setup of Pollington and Vaughan. By periodicity, we consider only $\alpha \in [0,1]^k$ and thus $0\leq a \leq q$, and we assume $\psi(q)\leq 1/2$, following \cite{aistleitner2023metrictheoryofapproximations, koukoulopoulos2024sharpquantitativeversionduffinschaeffer}. The main theorem of this paper is the following. We state the result for all $k\geq 1$ for completeness; our contribution, of course, is the case $k\geq 2$.
% \todoMH[inline]{Depending on personal taste, you can state it for $k \geq 1$ - of course it's only new in higher dimension, but nevertheless true.}
\begin{theorem}\label{theorem:k-dim-ds}
   Let $k\geq 1$, and let $\map{\psi}{\N}{\left[0,1/2\right]}$ be such that $\sum_{q=1}^\infty \left(\varphi(q)\psi(q)/q\right)^k=\infty$. For $\alpha\in \left[0,1\right]^k$, denote by $S_k(\alpha,Q)$ the number of solutions $(a,q)\in\Z^k\times \N$ to the inequality:
    \begin{align*}
        \pn{q\alpha-a}{\infty} < \psi(q), \quad \text{given } q \leq Q,
    \end{align*}
    satisfying $\gcd(a_i, q)=1$, for $i\in\{1,\dots, k\}$, $a=(a_1,\dots, a_k)$. Moreover, given $Q\in \N$, define:
    \begin{align*}
        \Psi_k(Q)\coloneq \sum_{q\leq Q}\left(\frac{2\psi(q)\varphi(q)}{q}\right)^k.
    \end{align*}
    Then for all $\ep>0$, as $Q\to \infty$, we have for almost all $\alpha$ that:
    \begin{align*}
        S_k(\alpha,Q)=\Psi_k(Q)+O_{\ep,k}\left( \Psi_k(Q)^{1/2+\ep} \right).
    \end{align*}
\end{theorem}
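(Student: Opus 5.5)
The plan is the classical Schmidt-type second-moment method, combined with an overlap estimate in the style of Pollington--Vaughan. What is needed is a variance bound over arbitrary ranges $M<q\le N$ that is sharp up to a factor $\Psi_k^{\ep}$.

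\textbf{Setup.} For each $q$ let
\begin{align*}
\mathcal{A}_q=\bigcup_{a\in[0,q]^k,\ \gcd(a_i,q)=1}\Big\{\alpha\in[0,1]^k : \pn{\alpha-a/q}{\infty}<\psi(q)/q\Big\},
\end{align*}
and let $f_q=\ind{\mathcal{A}_q}$ count solutions with denominator $q$. The sets are disjoint boxes, since $\psi\le1/2$. Hence
\begin{align*}
\lambda(\mathcal{A}_q)=\left(\frac{2\psi(q)\varphi(q)}{q}\right)^k,
\end{align*}
and $S_k(\alpha,Q)=\sum_{q\le Q}f_q(\alpha)$ has mean $\Psi_k(Q)$.

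\textbf{Reduction to a variance bound.} By the standard lemma (Harman, Lemma 1.5 / Schmidt's dyadic argument), it suffices to prove, for all $1\le M<N$,
\begin{align*}
\int\Big(\sum_{M<q\le N}(f_q-\lambda(\mathcal{A}_q))\Big)^2\,d\alpha \ll_{\ep,k} \big(\Psi_k(N)-\Psi_k(M)\big)\,\Psi_k(N)^{\ep}.
\end{align*}
This bound then gives the stated asymptotic almost surely, with error $\Psi_k^{1/2+\ep}$.

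\textbf{The overlap sum.} Expanding the square reduces the problem to bounding
\begin{align*}
\sum_{M<q\ne r\le N}\big(\lambda(\mathcal{A}_q\cap\mathcal{A}_r)-\lambda(\mathcal{A}_q)\lambda(\mathcal{A}_r)\big).
\end{align*}
The intersection factorises over coordinates, since both sets are products of one-dimensional sets built from the same $q$ and $r$. So
\begin{align*}
\lambda(\mathcal{A}_q\cap\mathcal{A}_r)=\lambda(A_q\cap A_r)^k,
\end{align*}
where $A_q\subset[0,1]$ is the one-dimensional Duffin--Schaeffer set. For the one-dimensional overlap I would use the Pollington--Vaughan estimate
\begin{align*}
\lambda(A_q\cap A_r)\le \lambda(A_q)\lambda(A_r)\,P(q,r)+\text{(error)},
\end{align*}
where $P(q,r)=\prod_{p\mid qr/\gcd(q,r)^2,\ p>D}(1+1/p)$ and $D=M(q,r)/(r\psi(q)+q\psi(r))$ with $M(q,r)$ the lcm-type normalisation. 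The error term is present only when $D$ is small.

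\textbf{Where dimension helps, and the main obstacle.} Raising to the $k$-th power gives an overlap of size $(\lambda\lambda)^k(P^k-1)$ plus error. Two features make $k\ge2$ easier than $k=1$. First, the error terms are raised to the $k$-th power. Second, the number of pairs with small $D$ is controlled by summing $\big(\gcd(q,r)\,\min\psi/\ldots\big)^k$, and for $k\ge2$ this converges essentially without the GCD-graph machinery. The main obstacle is the piece $\sum(\lambda\lambda)^k(P(q,r)^k-1)$: I must show it is $\ll\Psi_k\cdot\Psi_k^{\ep}$ rather than $\Psi_k^2$. I would follow Koukoulopoulos--Maynard--Yang and split by whether $\sum_{p\mid qr/\gcd^2,\ p>\Psi^{\ep}}1/p$ exceeds a threshold. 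Pairs with few large primes give $P^k-1\ll\Psi^{-c}$ after a suitable choice of cutoff. Pairs with many such primes are rare, and I would bound them by a moment/Rankin-type argument, or by invoking the KMY quantitative GCD-graph bound with weights $(\psi(q)\varphi(q)/q)^k$. The key check is that their structural theorem applies to these $k$-th power weights; I expect this to hold because their argument only uses $\sum w_q$ and the weights are still supported where $\psi\le1/2$. This is the step where I expect the real difficulty.
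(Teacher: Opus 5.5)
\textbf{Gap: transferring the GCD-graph bounds to $k$-th power weights.} Your outline has the same architecture as the paper: reduction via Harman's Lemma~1.5 to the variance bound of Theorem~\ref{theorem:k-dim-variance}, the coordinatewise identity $\lambda_k(\A_q^k\cap\A_r^k)=\lambda(\A_q\cap\A_r)^k$, a Koukoulopoulos--Maynard--Yang splitting, and GCD-graph bilinear bounds. However, the step you flag as the real difficulty is exactly the paper's new content, and your reason for expecting it to work is false.

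The GCD-graph bounds do not depend only on $\sum_q w_q$. No bound of the shape $\sum_{D(q,r)\le y}w_qw_r\ll_\ep y^{1-\ep}(\sum_q w_q)^{1+\ep}$ holds for arbitrary weights. Take the non-reduced weights $w_q=\psi(q)$ with $\psi(q)=q/(2N)$ on the divisors $q<N$ of $N$. Every pair has $D(q,r)=\lcm(q,r)/(2N)\le 1/2$, so the left side is $(\sum_q w_q)^2$. Here $\sum_q w_q=(\sigma(N)-N)/(2N)$, which is unbounded along primorials. So the arithmetic form of the weights is essential.

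The missing idea is to write
\begin{align*}
\left(\frac{\varphi(q)\psi(q)}{q}\right)^k = y\cdot\frac{f(q)\tilde\psi(q)}{q},\qquad f(n)=\frac{\varphi(n)^k}{n^{k-1}},\quad \tilde\psi(q)=\frac{\psi(q)^k}{y}.
\end{align*}
Then:
\begin{itemize}
\item $f$ is multiplicative with $(1\star f)(n)\le\sum_{d\mid n}\varphi(d)=n$.
\item Since $\psi\le 1/2$, we have $\psi^k\le\psi$, hence $D_{\tilde\psi,\tilde\psi}(q,r)\le D(q,r)/y$.
\item Therefore the constraint $D\le y$ places the pairs in $\E^{t,K}_{\tilde\psi,\tilde\psi}$ or $\mathcal{F}^{t,K}_{\tilde\psi,\tilde\psi}$.
\end{itemize}
You then need GCD-graph theorems that allow a general multiplicative $f$ with $1\star f\le\mathrm{id}$ and a general approximating function. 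These are Lemmas~\ref{lem:bilinear-e12} and~\ref{lem:bilinear-e3}, from \cite{hauke2024provingduffinschaefferconjecturegcd} and \cite{vazquez2024almostsharpquantitativeduffinschaeffergcd}. The bilinear bounds of \cite{koukoulopoulos2024sharpquantitativeversionduffinschaeffer} are formulated for the weights $\varphi(q)\psi(q)/q$. The substitution above yields Propositions~\ref{proposition:overlap-sum-1}--\ref{proposition:overlap-sum-3}.

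\textbf{Problems with the rest of the plan.}
\begin{itemize}
\item \emph{Overlap estimate.} Lemma~\ref{lem:pv-overlap} gives only $\ll_k$. That is useless on the main class of pairs, where you need leading constant exactly $1$. There you need the Koukoulopoulos--Maynard--Yang overlap in its $k$-dimensional form, Lemma~\ref{lem:k-dim-overlap}.
\item \emph{Your first ``feature where dimension helps''.} Raising to the $k$-th power does not shrink the error. It stays of relative size $\log(4D)/D$, with $2^{\omega_t}$ replaced by the larger $2^{k\omega_t}$. So this feature is illusory.
\item \emph{Your second feature (small-$D$ pairs).} The claim that these pairs are handled by an elementary convergent sum for $k\ge2$ is unsupported, and the paper does not do this. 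The relative error $O_k(D^{-1+\ep/2})$ on the main class is summed dyadically using Proposition~\ref{proposition:overlap-sum-1}, i.e.\ $\sum_{D\le y}w_qw_r\ll y^{1-\ep}\Psi_k^{1+\ep}$. That is itself a GCD-graph bound.
\item \emph{The splitting.} The primes must be split at the $D$-dependent level $t=D^2$, not at $\Psi^{\ep}$. Three classes are needed:
\begin{enumerate}
\item $L_{D^2}\le 1/D$ and $\omega_{D^2}\le\frac{\ep}{4k}\log(2D)$;
\item $L_{D^2}>1/D$;
\item $L_{D^2}\le 1/D$ and $\omega_{D^2}>\frac{\ep}{4k}\log(2D)$.
\end{enumerate}
The third class, pairs with many small primes where the factor $2^{k\omega_t}$ is not controlled, has no counterpart in your plan. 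It requires the separate $\omega_t$ bound of Lemma~\ref{lem:bilinear-e3}.
\end{itemize}
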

This result establishes a quantitative estimate for the number of solutions for the $k$-dimensional Duffin-Schaeffer conjecture for all $k\geq 2$, matching the best-known error term in the 1-dimensional case from \cite{koukoulopoulos2024sharpquantitativeversionduffinschaeffer}. We note that, in the original, non-reduced fraction case of Khintchine, without assuming monotonicity, asymptotic formulae exist only in the case $k\geq 3$. For $k=1,2$, we have such formulae with monotonicity; for $k=1$, we know that this monotonicity is essential, but for $k=2$, it remains an open question: see \cite[Chapter~3]{harman1998metricnumbertheory} or \cite[Section~5]{beresnevich2010khintchinegroshev}.
% \todoMH[inline]{I think you can omit (up to the implied constant). Maybe also state that there is no quantitative formula known in the non-reduced case(s), I think this was asked somewhere in a paper of Victor and Sanju (I believe it was "metric d.a. revisited: the Khintchine-Groshev", but I might be wrong).
% By cases, I mean you can only look at joint gcd being $1$, or no gcd condition at all.
% }
% \todoCOR[inline]{Is this correct/what you meant?}
% \todoMH[inline]{Yes this is what I had in mind. I didn't remember correctly though that they speak about the linear form case mainly. 
% They do say something about a certain Schmidt formula not being applicable, but this doesn't mean that there are no formulae. So one clearly has to be more precise what we say above: In dimension $k \geq 3$, we have asymptotic formulae. In dimensions 1 and 2, we have asymptotic formula as soon as we assume monotonicity conditions. The dimension 2 and 3 theory is treated in the GCD sum chapter in Harman's book (chapter 3 I think), so potentially just state Harman chapter 3 and the BV paper as two references, and say that the case $k = 2$ is still open.
% }
% \todoCOR[]{Is this correct? I've tried to read and understand both but not 100 percent getting the BV paper so mostly followed Harman}
% \todoMH[inline]{I would put Theorem 2 also in the intro, I think this is interesting enough on its own.}
% \todoMH[inline]{You can read what we did in the ABH paper between Theorems 1 and Theorem 2, and put a shorted version of this as a transition from Thm 1 to Thm 2 here.}

The method of proving Theorem~\ref{theorem:k-dim-ds} and similar results in metric Diophantine approximation is a standard procedure in metric number theory, with ideas dating back to at least \cite{cassels1950somemetricaltheorems, schmidt1964metricaltheorems}. We omit the proof, referring the reader to \cite[Lemma~1.5]{harman1998metricnumbertheory} for the general procedure and to \cite{koukoulopoulos2024sharpquantitativeversionduffinschaeffer} for the details in our case. The key is a variance estimate of random variables describing the number of solutions for a given $\alpha$. From a probabilistic perspective, the 1-dimensional Duffin-Schaeffer conjecture asks whether a uniformly randomly chosen irrational $\alpha$ is contained within the set system:
\begin{align*}
    \A_q \coloneq \bigcup_{\substack{0\leq a \leq q,\\\gcd(a,q)=1}}\left(\frac{a}{q}-\frac{\psi(q)}{q}, \frac{a}{q} + \frac{\psi(q)}{q}\right)\cap \left[0,1 \right]
\end{align*}
for infinitely or finitely many $q$; for $k\geq2$, we ask about $\A_q^k\coloneq \A_q^{k-1}\times \A_q$. Denoting then by $\lambda_k$ the $k$-dimensional Lebesgue measure restricted to $[0,1]^k$, we note that $\Psi_k(Q) = \sum_{q=1}^Q\lambda_k(\A_q^k)$ as:
\begin{align*}
        \lambda_k(\A_q^k) = \left(\frac{2\psi(q)\varphi(q)}{q}\right)^k.
\end{align*}
Following this, one can also see that $S_k(\alpha,Q)=\sum_{q=1}^Q\ind{\A_q^k}(\alpha)$ for every $\alpha$. The result of Theorem~\ref{theorem:k-dim-ds} can then be understood as describing the concentration of $S_k(\alpha,Q)$ around its mean. As before, the result would follow immediately from the divergence Borel-Cantelli lemma with pairwise independence of the random variables $\ind{\A_q}$, which we do not have here. In fact, the overlap $\lambda_k(\A_q^k\cap \A^k_r)$ can be significantly larger than $\lambda_k(\A_q^k)\lambda_k(\A_r^k)$ for pairs $(q,r)$ where the prime factorisation of $Q$ contains a large number of small primes not dividing $r$, or vice versa. The crux of the argument involves showing that such pairs are relatively rare, and thus on average we have a form of quasi-independence. In fact, one can show that the set system moves towards pairwise independence on average as the total mass tends towards infinity. Our key result below establishes exactly this for the case $k\geq 2$; again, stated as $k\geq 1$ for completeness.

\begin{theorem}\label{theorem:k-dim-variance}
    Let $k\geq 1$ and $Q\in \N$, and define $S_k(\alpha,Q)$ and $\Psi_k(Q)$ as in Theorem~\ref{theorem:k-dim-ds}. Then, for every fixed $\ep>0$, we have:
    \begin{align*}
         \lint{\left[0,1\right]^k}{}{\left(S_k(\alpha, Q)-\Psi_k(Q)\right)^2}{\alpha} \leq \Psi_k(Q) + O_{\ep,k}(\Psi_k(Q)^{1+\ep}).
     \end{align*}
\end{theorem}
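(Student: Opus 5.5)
Expanding the square and using $\int S_k = \Psi_k(Q)$ (since $\int \ind{\A_q^k}=\lambda_k(\A_q^k)$), the variance equals
\begin{align*}
\sum_{q,r\leq Q}\left(\lambda_k(\A_q^k\cap\A_r^k)-\lambda_k(\A_q^k)\lambda_k(\A_r^k)\right).
\end{align*}
The diagonal terms $q=r$ contribute at most $\sum_q\lambda_k(\A_q^k)=\Psi_k(Q)$, which accounts for the main term. It therefore suffices to show
\begin{align*}
\sum_{q\neq r\leq Q}\left(\lambda_k(\A_q^k\cap\A_r^k)-\lambda_k(\A_q^k)\lambda_k(\A_r^k)\right)\ll_{\ep,k}\Psi_k(Q)^{1+\ep}.
\end{align*}
The key structural observation is that $\A_q^k$ is a product set, $\A_q\times\cdots\times\A_q$. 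Consequently $\lambda_k(\A_q^k\cap\A_r^k)=\lambda_1(\A_q\cap\A_r)^k$, so every pairwise overlap is the $k$-th power of the one-dimensional overlap. This lets us import the one-dimensional overlap estimates rather than redo them.

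First, I will recall the standard Pollington--Vaughan overlap estimate. For $q\neq r$ it gives
\begin{align*}
\lambda_1(\A_q\cap\A_r)\ll \frac{\psi(q)\varphi(q)}{q}\cdot\frac{\psi(r)\varphi(r)}{r}\,P(q,r)+\ind{M(q,r)\geq D(q,r)}\,(\text{tail}),
\end{align*}
where $D(q,r)=\max(r\psi(q),q\psi(r))/\gcd(q,r)$, and $P(q,r)=\prod_{p\mid qr/\gcd(q,r)^2,\,p>D}(1+1/p)$ is the anatomy factor. It is more convenient to use the refined form from \cite{koukoulopoulos2024sharpquantitativeversionduffinschaeffer}. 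That form says the overlap equals $\lambda_1(\A_q)\lambda_1(\A_r)\bigl(1+O(\text{error})\bigr)$ unless $(q,r)$ lies in an exceptional set of pairs where $D$ is small or $P$ is large. Raising to the $k$-th power, the non-exceptional pairs contribute $\lambda_k(\A_q^k)\lambda_k(\A_r^k)\cdot O(k\cdot\text{error})$. The one-dimensional argument controls the sum of these errors with weights $\lambda_1(\A_q)\lambda_1(\A_r)$; I want the same control with weights $\lambda_k$. The plan is to reweight, that is, to apply the one-dimensional machinery to the auxiliary function $\tilde\psi(q)$ defined by
\begin{align*}
\frac{2\tilde\psi(q)\varphi(q)}{q}=\left(\frac{2\psi(q)\varphi(q)}{q}\right)^k.
\end{align*}
Then $\tilde\Psi(Q)=\Psi_k(Q)$, and $\tilde\psi\le\psi$ (up to the harmless factor coming from $2\psi\varphi/q\le 1$).

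The main obstacle, which I will treat carefully, is the comparison between $\lambda_1(\A_q\cap\A_r)^k$ and the one-dimensional overlap for $\tilde\psi$. Since $\lambda_1(\A_q\cap\A_r)\le\min(\lambda_1(\A_q),\lambda_1(\A_r))$, we have
\begin{align*}
\lambda_1(\A_q\cap\A_r)^k\le \lambda_1(\A_q\cap\A_r)\,\lambda_1(\A_q)^{(k-1)/2}\lambda_1(\A_r)^{(k-1)/2}.
\end{align*}
Plugging in the Pollington--Vaughan bound then yields a bound of the shape $\lambda_k(\A_q^k)^{1/2+1/2k}\cdots$ multiplied by the anatomy factor $P(q,r)$, restricted to the range where the gcd-type quantity $D$ is at most a constant. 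Beyond this range the overlap is essentially independent, and its deviation from independence is $O(\lambda\lambda/D)$, which is summable. For $k\ge2$, the extra factor $\lambda_1(\cdot)^{k-1}\le1$ only helps. The remaining task is to verify that the Koukoulopoulos--Maynard--Yang bound, namely the GCD-graph/anatomy estimate
\begin{align*}
\sum_{q,r:\,D(q,r)\le \Psi^{-\ep}}\lambda_1(\A_q)\lambda_1(\A_r)P(q,r)\ll\Psi^{1+\ep},
\end{align*}
transfers to the weights $\lambda_k$. I will apply their structural theorem, stated for arbitrary weights on a set of integers with the gcd condition expressed via $\psi$, to the measure given by the $\lambda_k$ weights. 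Because of the inequalities above, the relevant $D$ for our pairs can be compared with the $D$ built from $\tilde\psi$, with a loss in the exponent that the $\ep$ absorbs.

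Finally, I will split the off-diagonal pairs by the size of $D(q,r)$: large $D$, intermediate, and $D\le\Psi_k(Q)^{-\ep}$. In the first regime I will use the near-independence bound. In the second I will use $P(q,r)\ll\exp(\sum_{p>D}\ldots)$ together with averaging. In the third, which is the hardest, I will invoke the KMY GCD-graph result with the reweighted function. Summing the three regimes gives $O_{\ep,k}(\Psi_k(Q)^{1+\ep})$, and together with the diagonal term $\Psi_k(Q)$ this proves the theorem.
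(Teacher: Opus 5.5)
Your reduction to the off-diagonal sum, the identity $\lambda_k(\A_q^k\cap\A_r^k)=\lambda(\A_q\cap\A_r)^k$, and the reweighting $\tilde\psi=\psi\,(2\psi\varphi/q)^{k-1}$ (so $\tilde\Psi(Q)=\Psi_k(Q)$ and $\tilde\psi\le\psi$) are all sound. The argument you build on them, however, does not reach $O(\Psi_k(Q)^{1+\ep})$.

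First, the step $\lambda(\A_q\cap\A_r)^k\le\lambda(\A_q\cap\A_r)\,(\lambda(\A_q)\lambda(\A_r))^{(k-1)/2}$, followed by Pollington--Vaughan, leaves you with weights $(\lambda_k(\A_q^k)\lambda_k(\A_r^k))^{1/2+1/(2k)}$. These are \emph{larger} than $\lambda_k(\A_q^k)\lambda_k(\A_r^k)$, and their sums are not controlled by $\Psi_k(Q)$ at all. You should instead raise the one-dimensional bounds to the $k$-th power (Lemmas~\ref{lem:pv-overlap} and~\ref{lem:k-dim-overlap}). This keeps the weights $\lambda_k\lambda_k$ at the price of factors $e^{kL_D}$, $e^{2kL_t}$ and $2^{k\omega_t}$. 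No comparison with the overlap of the $\tilde\psi$-sets is needed, nor is one available: those sets can be disjoint while the $\psi$-sets are not.

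Second, you split only by the size of $D$, and your ``hardest'' regime $D\le\Psi_k(Q)^{-\ep}$ is vacuous. Once $\Psi_k(Q)>2^{1/\ep}$ it forces $D<1/2$, where $\A_q\cap\A_r=\varnothing$. The actual difficulty lives at every scale $D\ge 1/2$. Near-independence $\lambda_k\lambda_k(1+O(D^{-1+\ep/2}))$ holds only when $L_{D^2}(q,r)\le 1/D$ and $\omega_{D^2}(q,r)\le\frac{\ep}{4k}\log(2D)$. Pairs violating these occur for arbitrarily large $D$, and there only the bound $\lambda_k\lambda_k e^{kL_D}$ is available. The classical Pollington--Vaughan averaging of this anatomy factor gives only $\sum_{q,r}\lambda_k(\A_q^k\cap\A_r^k)\ll\Psi_k(Q)^2$, which is the wrong order for a variance bound. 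Even the error on the good pairs, $\sum\lambda_k\lambda_k D^{-1+\ep/2}$, is trivially only $O(\Psi_k(Q)^2)$; it is not ``summable'' for free.

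The missing ingredient is the family of bilinear bounds with decay, Propositions~\ref{proposition:overlap-sum-1}--\ref{proposition:overlap-sum-3}. These give $\sum_{D\le y}\lambda_k\lambda_k\ll y^{1-\ep}\Psi_k^{1+\ep}$, with an extra factor $e^{-Ct/s}$ on pairs with $L_t\ge 1/s$ and a factor $t^{-C}$ on pairs with $\omega_t\ge\kappa\log t$. They are applied on dyadic ranges $D\in[2^{i-1},2^i)$, with $t$ of size $4^i$ or $4^j$, across a three-way partition of the pairs by $L_{D^2}$ and $\omega_{D^2}$.

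Your $\tilde\psi$ is in fact a tidy way to obtain these bounds. Since $D_{\tilde\psi}\le D_\psi$ and the conditions on $L_t,\omega_t$ do not involve $\psi$, each $k$-dimensional sum is dominated by the corresponding one-dimensional Koukoulopoulos--Maynard--Yang sum for $\tilde\psi$. That is, you can use the $k=1$ cases of those propositions, whose hypotheses $\tilde\psi\le\psi\le 1/2$ satisfies, and whose weights are $\frac14\lambda_k(\A_q^k)\lambda_k(\A_r^k)$. This is a more direct route than the paper's appeal to the Hauke/V\'azquez framework with $f(n)=\varphi(n)^k/n^{k-1}$ and weight $\psi^k/y$. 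But it has to be applied to these anatomy-defined exceptional sets at all scales of $D$, not to an overlap comparison in an empty range.
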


The rest of this paper is dedicated to constructing the proof of Theorem~\ref{theorem:k-dim-variance}. The standard approach involves a counting argument on the prime factorisations of $q$ and $r$, followed by a sieve on the coprimality condition. In the $k$-dimensional problem, one can simply adapt the $1$-dimensional overlap estimates proven in \cite{aistleitner2023metrictheoryofapproximations, koukoulopoulos2024sharpquantitativeversionduffinschaeffer, pollington1990kdimensionalduffinschaeffer}, as shown in Section~\ref{sec:overlap}. From here, we construct three bilinear estimates, allowing us to bound the variance contributions of the problematic pairs of integers $(q,r)$ with many prime factors. In the $1$-dimensional case, the approach of \cite{koukoulopoulos2020duffin, koukoulopoulos2024sharpquantitativeversionduffinschaeffer} is a technical iterative construction via GCD graphs. This approach was generalised in \cite{hauke2024provingduffinschaefferconjecturegcd, vazquez2024almostsharpquantitativeduffinschaeffergcd}, providing a framework for similar bounds on general multiplicative functions. In Section~\ref{sec:bilinear}, we show how this framework can be employed to construct equivalent bilinear bounds in all dimensions $k\geq 2$. From here, the variance estimate follows with an approach adapted from \cite{koukoulopoulos2024sharpquantitativeversionduffinschaeffer}, as shown in Section~\ref{sec:variance}.

% \todoMH[inline]{You might want to put in some short Acknowledgement section - you can thank me for discussions/suggesting this research (not that I particularly care, but it somehow gives you credibility); you can thank TU Graz/Institute of AZT for their hospitality etc. You can also mention that this was written as part of your master thesis if you want to.}
% \todoCOR[inline]{Yes, I planned to do this before submitting it :)}
\subsection*{Acknowledgements}
The author would like to thank Manuel Hauke for introducing them to the topic and for guidance and discussion throughout the work. The work was conducted during a research stay in TU Graz, as part of the author's master's thesis in NTNU; the author would thus also like to thank the Institute of Analysis and Number Theory in TU Graz for their hospitality.

\section{Overlap estimates}\label{sec:overlap}
For readability, given $t\geq 1$ and integers $q,r$, we define:
\begin{align*}
    L_t(q,r)\coloneq \sum_{\substack{p\mid\frac{qr}{(q,r)^2},\\p>t}}\frac{1}{p}, \text{ and } \omega_t(q,r)\coloneq \#\set{p\mid \frac{qr}{(q,r)^2}}{p\leq t}.
\end{align*}
Moreover, define:
\begin{align*}
    D(q,r)\coloneq \frac{\max(r\psi(q),q\psi(r)}{\gcd(q,r)}.
\end{align*}
Note that one can write:
\begin{align*}
        D(q,r) &= \lcm(q,r) \max\left(\frac{\psi(q)}{q}, \frac{\psi(r)}{r}\right),
    \end{align*}
    and thus if $D(q,r)<1/2$, we have:
    \begin{align*}
        \frac{1}{\lcm(q,r)} > 2\max\left(\frac{\psi(q)}{q}, \frac{\psi(r)}{r}\right) \geq \frac{\psi(q)}{q} + \frac{\psi(r)}{r}.
    \end{align*}
    We must then have $\A_q\cap \A_r=\varnothing$; otherwise there exists $a_i,b_i$ coprime to $q,r$ respectively with:
    \begin{align*}
        \left\lvert \frac{a_i}{q} - \frac{b_i}{r}\right\rvert = \frac{1}{\lcm(q,r)} \left\lvert \frac{a_ir-b_iq}{\gcd(q,r)} \right\rvert < \frac{1}{\lcm(q,r)},
    \end{align*}
    which is a contradiction.
    % \todoCOR[]{Unsure if we need the explanation, but it's nice, unexplained thoroughly elsewhere from my knowledge, and will certainly go in the thesis}
    As such, from here we will assume $D(q,r)\geq 1/2$. The variance estimate requires us to use two overlap estimates. The first comes from \cite{pollington1990kdimensionalduffinschaeffer}. 
\begin{lemma}\cite{pollington1990kdimensionalduffinschaeffer}\label{lem:pv-overlap}
    Let $q\neq r$ be natural numbers, and let $D=D(q,r)$. Then we have:
    \begin{align*}
        \lambda_k(\A_q^k\cap \A_r^k)\ll_k \ind{D\geq 1/2} \lambda_k(\A_q^k)\lambda_k(\A_r^k)\me^{kL_D(q,r)}.
    \end{align*}
\end{lemma}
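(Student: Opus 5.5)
The key observation is that the $k$-dimensional sets are products: $\A_q^k=\A_q\times\cdots\times\A_q$, and Lebesgue measure on $[0,1]^k$ is the product of one-dimensional Lebesgue measures. Hence
\begin{align*}
    \lambda_k(\A_q^k\cap\A_r^k)=\lambda_k\big((\A_q\cap\A_r)^k\big)=\lambda_1(\A_q\cap\A_r)^k,
\end{align*}
and likewise $\lambda_k(\A_q^k)=\lambda_1(\A_q)^k$. The lemma therefore reduces to the one-dimensional overlap estimate
\begin{align*}
    \lambda_1(\A_q\cap\A_r)\ll \ind{D\geq 1/2}\,\lambda_1(\A_q)\lambda_1(\A_r)\,\me^{L_D(q,r)},
\end{align*}
with an absolute implied constant $C$. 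Raising it to the $k$-th power gives the claim with constant $C^k$, and the factor $\me^{kL_D(q,r)}$ appears automatically. The indicator factor is already justified by the argument just before the lemma, where we showed that $D(q,r)<1/2$ forces $\A_q\cap\A_r=\varnothing$.

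For the one-dimensional estimate I would follow Pollington--Vaughan, and Koukoulopoulos--Maynard. Write $d=\gcd(q,r)$ and $M=\lcm(q,r)$. Let $\delta=\min(\psi(q)/q,\psi(r)/r)$ and $\Delta=\max(\psi(q)/q,\psi(r)/r)$. Each pair of intervals centred at $a/q$ and $b/r$ overlaps in a set of measure at most $2\delta$, and only if $|a/q-b/r|<\delta+\Delta\leq 2\Delta$. Writing $a/q-b/r=(ar'-bq')/M$ with $q'=q/d$, $r'=r/d$, it suffices to count pairs $(a,b)$ with $\gcd(a,q)=\gcd(b,r)=1$ and $ar'-bq'=h$, where $0<|h|\leq 2\Delta M=2D$. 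The value $h=0$ is impossible for $q\neq r$ under the coprimality conditions. For fixed $h$, I count solutions via the Chinese remainder theorem, prime by prime, as an Euler product. This gives a count of
\begin{align*}
    \ll \varphi(q)\varphi(r)\frac{d}{qr}\cdot\frac{d}{\varphi(d)}\prod_{p\mid h,\ p\mid qr/d^2}\Big(1-\frac1p\Big)^{-1}
\end{align*}
up to constants, and in fact the factor $d/\varphi(d)$ cancels in the standard computation. Summing over $|h|\leq 2D$ and multiplying by $2\delta$, the main term becomes $\ll \delta D\,\varphi(q)\varphi(r)/M$, which is $\asymp\lambda_1(\A_q)\lambda_1(\A_r)$ times an average over $h$ of $\prod_{p\mid h,\,p\mid qr/d^2}(1-1/p)^{-1}$.

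The step I expect to be the main obstacle is bounding this average. I would expand the product as $\sum_{m\mid h,\ m\mid qr/d^2}\mu^2(m)/\varphi(m)$ and swap summations, giving
\begin{align*}
    \ll D\sum_{m\mid qr/d^2,\ m\leq 2D}\frac{\mu^2(m)}{m\varphi(m)}.
\end{align*}
Terms with $m>2D$ contribute nothing, because $m\mid h\neq 0$ forces $m\leq 2D$. Since $\sum \mu^2(m)/(m\varphi(m))$ over $m$ composed of primes $\leq D$ is bounded by an absolute constant times $\prod_{p\le D}(1+O(1/p^2))$, the small primes cost only $O(1)$. The large primes are then handled by restricting to primes $p>D$ dividing $qr/d^2$. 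The average over $|h|\le 2D$ of $\prod_{p\mid h,\,p>D,\,p\mid qr/d^2}(1-1/p)^{-1}$ is at most $\prod_{p>D,\,p\mid qr/d^2}(1+O(1/p))\le \me^{O(L_D(q,r))}$, since each such $p$ divides at most one $h$ in the range plus the trivial average. Making the exponent exactly $L_D$ (rather than $O(L_D)$) may need care with the convention for $L_t$, but any $\me^{cL_D}$ bound suffices for later use. Combining these pieces yields the one-dimensional estimate, and hence the lemma.
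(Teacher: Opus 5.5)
Your reduction to dimension one is correct: $\A_q^k\cap\A_r^k=(\A_q\cap\A_r)^k$, and raising a one-dimensional bound to the $k$-th power gives the claim. This is the same product reduction the paper uses in the proof of Lemma~\ref{lem:k-dim-overlap}. The paper does not prove the present lemma; it cites Pollington--Vaughan. Had you cited the one-dimensional bound $\lambda(\A_q\cap\A_r)\ll\lambda(\A_q)\lambda(\A_r)\prod_{p\mid qr/(q,r)^2,\,p>D}(1+1/p)$ and used $1+1/p\le \me^{1/p}$, you would be done, with exponent exactly $L_D$.

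\textbf{The gap: your count for fixed $h$ is wrong.} It has the primes of $qr/d^2$ playing the opposite role. Suppose $p\mid q'$. Then $ar'-bq'=h$ gives $ar'\equiv h \pmod p$ with $p\nmid r'$, so $p\mid h$ forces $p\mid a$, contradicting $\gcd(a,q)=1$. The case $p\mid r'$ is symmetric. Hence the number $N(h)$ of admissible pairs is \emph{zero} unless $\gcd(h,qr/d^2)=1$. For such $h$, write $a=a_0+q't$, $b=b_0+r't$ with $0\le t<d$, and sieve $t$ modulo the primes of $d$. This gives
\[
N(h)\le \frac{\varphi(q)\varphi(r)}{M}\prod_{p\mid qr/d^2}\Bigl(1-\frac1p\Bigr)^{-1}\prod_{p\mid \gcd(h,d)}\Bigl(1-\frac1p\Bigr)^{-1}.
\]

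\textbf{A concrete failure.} Take coprime squarefree $q,r$ built from many small primes. Then $N(1)=1$, whereas your bound is $\ll\varphi(qr)/(qr)\to 0$. Your own later steps expose the problem. Summing your formula over $0<|h|\le 2D$ gives $\ll D\sum_m\mu^2(m)/(m\varphi(m))\ll D$, i.e.\ $\lambda(\A_q\cap\A_r)\ll\lambda(\A_q)\lambda(\A_r)$ with no loss at all. That is false. In the same example take $\psi(q)=1/r$ and $\psi(r)=1/q$, so $D=1$; the pair with $h=1$ alone gives $\lambda(\A_q\cap\A_r)\ge 1/(qr)$. The ratio to $\lambda(\A_q)\lambda(\A_r)$ is then at least $qr/(4\varphi(qr))$, which is unbounded. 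The factor $\me^{O(L_D)}$ in your last paragraph is therefore not derived from anything: a prime $p>2D$ divides no $h$ with $0<|h|\le 2D$, so your weight is identically $1$ on exactly the primes that matter.

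\textbf{What is actually needed.} With the corrected $N(h)$ and $m=qr/d^2$, you must bound $\sum_{0<|h|\le 2D,\ \gcd(h,m)=1}\prod_{p\mid\gcd(h,d)}(1-1/p)^{-1}$ by $\ll D\prod_{p\mid m,\,p\le D}(1-1/p)$. The $\gcd(h,d)$ weight can be handled by your $\mu^2/\varphi$ expansion. The coprimality count requires an upper-bound sieve (Brun, Selberg, or the fundamental lemma); plain M\"obius inversion has error $2^{\omega(m)}$, which is far too large. The sieve gives no saving for primes $p>D$, and that is precisely what leaves $\prod_{p\mid m,\,p>D}(1-1/p)^{-1}\ll \me^{L_D(q,r)}$. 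This yields the exponent exactly $L_D$, so no hedging about $\me^{cL_D}$ is needed.
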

The second is adapted from \cite{koukoulopoulos2024sharpquantitativeversionduffinschaeffer} for the $k$-dimensional case; we first state their estimate.
\begin{lemma}\cite[Lemma~5.1]{koukoulopoulos2024sharpquantitativeversionduffinschaeffer}
    Let $q\neq r$ be natural numbers, and let $D=D(q,r)$. Then, for any $t\geq 1$ we have:
    \begin{align*}
        \lambda(\A_q\cap \A_r)\leq \ind{D\geq 1/2} \lambda(\A_q)\lambda(\A_r)\me^{2L_t(q,r)}\left(1+O\left(\frac{2^{\omega_t(q,r)}\log(4D)}{D} \right) \right).
    \end{align*}
\end{lemma}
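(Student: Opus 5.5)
The plan is to reduce $\lambda(\A_q\cap\A_r)$ to a weighted lattice-point count and then sieve the coprimality conditions. Primes above $t$ are handled crudely, and primes up to $t$ exactly by inclusion–exclusion.

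\textbf{Step 1: reduction to a count over $h$.} Write $g=\gcd(q,r)$, $q=gq'$, $r=gr'$, and for $a,b$ put $h=ar'-bq'$, so that $a/q-b/r=h/(gq'r')$. The intervals of $\A_q$ and $\A_r$ centred at $a/q$ and $b/r$ overlap in a set of length
\[
F(h)\coloneq \Bigl\lvert \bigl(-\tfrac{\psi(q)}{q},\tfrac{\psi(q)}{q}\bigr)\cap\bigl(\tfrac{h}{gq'r'}-\tfrac{\psi(r)}{r},\tfrac{h}{gq'r'}+\tfrac{\psi(r)}{r}\bigr)\Bigr\rvert .
\]
This is a trapezoidal function of $h$. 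It is supported on $\lvert h\rvert\ll gq'r'\max(\psi(q)/q,\psi(r)/r)=D$ (using $\lcm(q,r)=gq'r'$), it is bounded by $2\min(\psi(q)/q,\psi(r)/r)$, and it varies by $O(1/(gq'r'))$ per unit step in $h$. Hence
\[
\lambda(\A_q\cap\A_r)=\sum_{h\in\Z}F(h)\,N(h),
\]
where $N(h)$ counts $a\bmod q$, $b\bmod r$ with $\gcd(a,q)=\gcd(b,r)=1$ and $ar'-bq'=h$. If $D<1/2$ then only $h=0$ could contribute, and this is excluded as shown before Lemma~\ref{lem:pv-overlap}; this gives the indicator $\ind{D\geq 1/2}$.

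\textbf{Step 2: evaluating $N(h)$ prime by prime.} By the Chinese remainder theorem, $N(h)=g\prod_{p}\rho_p(h)$ with explicit local factors. Three cases occur.
\begin{itemize}[label={}]
\item If $p\mid q'$ (the case $p\mid r'$ is symmetric), the equation forces $a\equiv h\bar r'$ modulo $p$. The local density is then $1$ if $p\nmid h$ and $0$ if $p\mid h$, compared with the "expected" factor $(1-1/p)^2$.
\item If $p\mid g$ but $p\nmid q'r'$, the local factor is $(1-1/p)^2$ up to a correction involving whether $p\mid h$; this correction is harmless for primes of $g$ alone.
\end{itemize}
Normalising by $\frac{\varphi(q)\varphi(r)}{qr}$ gives
\[
N(h)=g\,\frac{\varphi(q)\varphi(r)}{qr}\prod_{p\mid q'r'}\Bigl(1-\frac1p\Bigr)^{-1}\ind{p\nmid h}\cdot(\text{factors from } p\mid g).
\]

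\textbf{Step 3: splitting the primes of $q'r'$ at $t$.} For the large primes $p>t$, bound $\ind{p\nmid h}\leq 1$ and $(1-1/p)^{-1}\leq \me^{1/p+O(1/p^2)}$. Absorbing the $O(1/p^2)$ terms (or keeping the exact factor $\prod(1-1/p)^{-1}$), this produces at most $\me^{2L_t(q,r)}$. For the small primes $p\leq t$ dividing $q'r'$, expand the conditions exactly by Möbius:
\[
\prod_{p\leq t,\ p\mid q'r'}\ind{p\nmid h}=\sum_{d\mid P_t}\mu(d)\ind{d\mid h},\qquad P_t\coloneq\prod_{p\leq t,\ p\mid q'r'}p,
\]
which has $2^{\omega_t(q,r)}$ terms.

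\textbf{Step 4: the sums over $h$ in progressions.} For each $d\mid P_t$, compare $\sum_{d\mid h}F(h)$ with $\frac1d\int F$. Since $F$ is a trapezoid of width $\asymp D$ and height $\ll\min(\psi(q)/q,\psi(r)/r)$, the relative error is $O(d/D)$ when $d\leq D$. For $d>D$ one uses the trivial bound from $h=0$ and the few multiples of $d$. The factors $\prod_{p\mid d}(1-1/p)^{-1}\cdot\frac1d$ recombine through the Möbius sum into exactly $\prod_{p\leq t}$ of the expected densities. Combining Steps 2–4, the main term then reassembles to $\lambda(\A_q)\lambda(\A_r)\me^{2L_t}$, because $g\int F\cdot\frac{1}{gq'r'}$-scaling gives $(2\psi(q)/q)(2\psi(r)/r)\cdot\frac{\varphi(q)\varphi(r)}{1}\cdot\frac{1}{qr}\cdot qr$ after normalisation.

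\textbf{Main obstacle.} The delicate point is controlling the error terms uniformly, so that the total error is $2^{\omega_t}\log(4D)/D$ rather than something depending on the size of $d$. Divisors $d$ much larger than $D$ cannot be handled by equidistribution. There I would instead bound their contribution as a lattice count in a short interval. I expect the $\log(4D)$ to come from summing the $1/p$-weights $\prod_{p\mid d}p/(p-1)$ over the $d$ in this range.
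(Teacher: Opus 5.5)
The paper does not prove this lemma; it is quoted from \cite{koukoulopoulos2024sharpquantitativeversionduffinschaeffer}, so your outline has to stand on its own. Its skeleton is the standard one: parametrise by $h=ar'-bq'$, compute $N(h)$ locally, bound the primes $p>t$ of $q'r'$ using $p/(p-1)\le \me^{2/p}$, and sieve the primes $p\le t$ exactly. The genuine gap is at the primes $p\mid g$ with $p\nmid q'r'$. You call these harmless and then never treat them in Steps 3--4.

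Take $p^e\,\|\, g$ with $p\nmid q'r'$. For a unit $a$, the residue $b\equiv (ar'-h)\overline{q'}\pmod{p^e}$ is a unit iff $ar'\not\equiv h\pmod p$. So the local count is $\varphi(p^e)$ if $p\mid h$ and $p^{e-1}(p-2)$ if $p\nmid h$. Relative to the expected density this is a factor $p/(p-1)$ if $p\mid h$ and $1-(p-1)^{-2}$ if $p\nmid h$. The product $G(h)$ of these factors therefore exceeds $1$ whenever $h$ shares such a prime with $g$. It is also invisible to $t$, $L_t$ and $\omega_t$, since these primes do not divide $qr/(q,r)^2$, and $g$ may contain arbitrarily many of them for fixed $D$. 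The lemma asserts main-term constant exactly $1$, so you can neither drop $G$ nor bound it pointwise by a constant. $G$ has mean exactly $1$ and must be averaged against $F$.

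That averaging is the missing step, and it is what produces the $\log(4D)$. Let $g^*$ be the product of these primes, $\alpha_p=1-(p-1)^{-2}\in[0,1)$ and $\beta_p=p/(p-1)^2$, with $\alpha,\beta$ extended multiplicatively. Then $G(h)=\sum_{d\mid g^*}\beta(d)\alpha(g^*/d)\ind{d\mid h}$, and $\alpha_p+\beta_p/p=1$ makes the main terms recombine to $\int F$. Since $F$ is supported on $|h|<2D$ and $h=0$ is excluded, only $d<2D$ contribute. The term for $d$ costs $O(\beta(d)\|F\|_\infty)$ with $\beta(d)=d/\varphi(d)^2$, and $\sum_{d<2D}\mu^2(d)\,d/\varphi(d)^2\ll\log(4D)$, while $\int F=2D\|F\|_\infty$. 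This gives relative error $O(\log(4D)/D)$. Running it jointly with your M\"obius expansion over $d\mid P_t$ supplies the factor $2^{\omega_t}$.

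Your suggested source of the logarithm is therefore not right. A divisor $d\mid P_t$ with $d\geq 2D$ has no nonzero multiple in the support of $F$, so no lattice-point argument is needed there. Such $d$ only truncate the main term, by at most $\sum_{d\mid P_t,\,d\geq 2D}1/d\le 2^{\omega_t}/(2D)$.

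A smaller point: with your normalisation every M\"obius error term carries the prefactor $\prod_{p\mid P_t}p/(p-1)$. As written you therefore get $2^{\omega_t}\prod_{p\mid P_t}p/(p-1)\le 4^{\omega_t}$ rather than $2^{\omega_t}$. This is harmless for Section~\ref{sec:variance}, but it is not the stated bound.
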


The estimate can then be adjusted to the $k$-dimensional case straightforwardly:
\begin{lemma}\label{lem:k-dim-overlap}
    Let $q\neq r$ be natural numbers, and let $D=D(q,r)$. Then, for any $t\geq 1$ we have:
    \begin{align*}
        \lambda_k(\A_q^k\cap \A_r^k)\leq \ind{D\geq 1/2} \lambda_k(\A_q^k)\lambda_k(\A_r^k)\me^{2kL_t(q,r)}\left(1+O\left(\frac{2^{k\omega_t(q,r)}\log(4D)}{D} \right) \right).
    \end{align*}
\end{lemma}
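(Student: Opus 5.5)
The plan is to exploit the product structure. By definition $\A_q^k=\A_q\times\cdots\times\A_q$ ($k$ copies), and likewise for $\A_r^k$. Since $(A_1\times\cdots\times A_k)\cap(B_1\times\cdots\times B_k)=(A_1\cap B_1)\times\cdots\times(A_k\cap B_k)$, we have
\begin{align*}
    \A_q^k\cap\A_r^k=(\A_q\cap\A_r)^k .
\end{align*}
Because $\lambda_k$ restricted to $[0,1]^k$ is the $k$-fold product of $\lambda$, this gives
\begin{align*}
    \lambda_k(\A_q^k\cap\A_r^k)=\lambda(\A_q\cap\A_r)^k,
\end{align*}
and similarly $\lambda_k(\A_q^k)\lambda_k(\A_r^k)=\left(\lambda(\A_q)\lambda(\A_r)\right)^k$. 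The statement therefore reduces to raising the one-dimensional estimate of \cite[Lemma~5.1]{koukoulopoulos2024sharpquantitativeversionduffinschaeffer} to the $k$-th power. This is legitimate because both sides are nonnegative; for the right-hand side this holds as soon as $D\geq 1/2$, after possibly enlarging the implied constant so that the bracket is positive, or by noting that the left side is trivially bounded otherwise.

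\textbf{Case $D<1/2$.} We showed above that $\A_q\cap\A_r=\varnothing$, so both sides vanish and the indicator takes care of this case.

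\textbf{Case $D\geq 1/2$.} Write $x=2^{\omega_t(q,r)}\log(4D)/D$. Raising the one-dimensional bound to the $k$-th power gives
\begin{align*}
    \lambda_k(\A_q^k\cap\A_r^k)\leq \lambda_k(\A_q^k)\lambda_k(\A_r^k)\,\me^{2kL_t(q,r)}\,(1+Cx)^k .
\end{align*}
It remains to show $(1+Cx)^k=1+O_k(2^{(k-1)\omega_t}x)$, since $2^{(k-1)\omega_t}x=2^{k\omega_t}\log(4D)/D$. Expanding binomially, $(1+Cx)^k-1=\sum_{j=1}^k\binom kj C^jx^j$, so it suffices to bound each $x^j\ll_k 2^{(k-1)\omega_t}x$ for $1\le j\le k$, that is, $x^{j-1}\ll 2^{(k-1)\omega_t}$.

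This is the only step requiring care. Since $D\geq1/2$, the function $\log(4D)/D$ is bounded by an absolute constant $c$, because $\log(4D)/D\to 0$ and it is continuous on $[1/2,\infty)$. Hence $x\le c\,2^{\omega_t}$, and so
\begin{align*}
    x^{j-1}\le c^{j-1}2^{(j-1)\omega_t}\le c^{k}2^{(k-1)\omega_t}.
\end{align*}
Summing over $j$ gives $(1+Cx)^k\le 1+O_k\!\left(2^{k\omega_t(q,r)}\log(4D)/D\right)$, which is the claim. The implied constant depends on $k$, consistent with the $O$ in the statement, and the loss from $2^{\omega_t}$ to $2^{k\omega_t}$ is exactly what the binomial cross terms cost.
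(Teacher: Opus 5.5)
Your proposal is correct and follows essentially the same route as the paper: reduce via the product structure to the $k$-th power of the one-dimensional bound of Koukoulopoulos--Maynard--Yang, expand binomially, and absorb the terms $x^j$ into $2^{k\omega_t(q,r)}\log(4D)/D$ using that $\log(4D)/D$ is bounded for $D\geq 1/2$. Where the paper only writes $\leq$, your product-structure step gives equality, and your choice of $L_t$ in the exponent is the correct one.
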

\begin{proof}
    We have:
    \begin{align*}
        \lambda_k(\A_q^k\cap \A_r^k) &\leq (\lambda(\A_q\cap\A_r))^k\\
        &\leq \ind{D\geq 1/2} \lambda_k(\A_q^k)\lambda_k(\A_r^k)\me^{2kL_D(q,r)}\left(1+O\left(\frac{2^{\omega_t(q,r)}\log(4D)}{D} \right) \right)^k.
    \end{align*}
    Regarding the error term, we have:
    \begin{align*}
        \left(1+O\left(\frac{2^{\omega_t(q,r)}\log(4D)}{D} \right) \right)^k &= 1+\sum_{i=1}^k O_i\left(\frac{2^{i\omega_t(q,r)}(\log(4D))^i}{D^i} \right).
    \end{align*}
    Recall that we assume $D\geq 1/2$. To see the dominant term, note that $0<\log(4D)/D< 1$ for all $D\geq 3$, and bounded
    % \todoCOR[]{The bound is easily quantified, I left it out but I can add it back if you like}
    % \todoMH[]{No that's more than enough here.}
    for all $1/2\leq D < 3$. Thus we can write:
    \begin{align*}
        \left(1+O\left(\frac{2^{\omega_t(q,r)}\log(4D)}{D} \right) \right)^k &= 1+O_k\left(\frac{2^{k\omega_t(q,r)}\log(4D)}{D} \right),
    \end{align*}
    giving the required result.
\end{proof}

% --------------------
\section{Bilinear bounds}\label{sec:bilinear}
The variance estimate follows from three bilinear bounds, established in \cite{koukoulopoulos2024sharpquantitativeversionduffinschaeffer} via GCD graphs. To generalise these bounds into higher dimensions, we instead employ the framework of \cite{hauke2024provingduffinschaefferconjecturegcd} and \cite{vazquez2024almostsharpquantitativeduffinschaeffergcd}. In preparation of this, we first introduce some notation.

Let $\map{\psi,\theta}{\N}{[0,\infty)}$ be finitely supported, and let $\map{f,g}{\N}{[0,\infty)}$ be multiplicative functions. For $v\in\N$, define:
\begin{align*}
    \mu^f_\psi(v)\coloneq \frac{f(v)\psi(v)}{v}.
\end{align*}
Given $V\subset \N$, define:
\begin{align*}
    \mu^f_\psi(V)\coloneq \sum_{v\in V}\mu^f_\psi(v) = \sum_{v\in V}\frac{f(v)\psi(v)}{v}.
\end{align*}
Given $\E\subset \N\times \N$, define:
\begin{align*}
    \mu^{f,g}_{\psi,\theta}(\E)\coloneq \sum_{(v,w)\in\E}\mu^f_\psi(v)\mu^g_\theta(w).
\end{align*}
Denoting now $V_\psi=\supp \psi$ and $W_\theta = \supp \theta$, for all $t\geq 1$ and $K\in \R$, define:
\begin{align*}
    \E^{t,K}_{\psi,\theta}\coloneq \set{(v,w)\in V_\psi\times W_\theta}{D_{\psi,\theta}(v,w)\leq 1, L_t(v,w)\geq K},
\end{align*}
where:
\begin{align*}
    D_{\psi,\theta}(v,w) \coloneq\frac{\max(w\psi(v),v\theta(w))}{(v,w)}.
\end{align*}
Similarly, define:
\begin{align*}
    \mathcal{F}^{t,K}_{\psi,\theta}\coloneq \set{(v,w)\in V_\psi\times W_\theta}{D_{\psi,\theta}(v,w)\leq 1, \omega_t(v,w)\geq K}.
\end{align*}
The first lemma below then provides a bound in the case where $L_t(v,w)$ is large; the second following provides a bound when $\omega_t(v,w)$ is large.
\begin{lemma}\cite[Theorem~1.7]{hauke2024provingduffinschaefferconjecturegcd}\label{lem:bilinear-e12}
    Let $\ep\in(0,2/5]$. Then there exists $p_0(\ep)>0$ such that the following holds. Let $\map{\psi,\theta}{\N}{[0,\infty)}$ be finitely supported, with $V_\psi=\supp \psi$, $W_\theta = \supp \theta$, and:
    \begin{align*}
        \mathcal{P}_{\psi,\theta}\coloneq \set{p}{\exists (v,w)\in V_\psi\times W_\theta \text{ such that } p\mid vw}.
    \end{align*}
    Let $P_{\psi,\theta}(\ep)\coloneq p_0(\ep)+\lvert \mathcal{P}_{\psi,\theta}\cap[1,p_0(\ep)]\rvert$. Let $\map{f,g}{\N}{[0,\infty)}$ be multiplicative functions such that:
    \begin{align*}
        (1\star f)(n)\leq n, \text{ and } (1\star g)(n)\leq n, \text{ for all }n\geq 1.
    \end{align*}
    Suppose that $\E \subset \E^{t,K}_{\psi,\theta}\cap (V\times W)$. Then for all $t\geq 1$ and $K\in \R$, we have:
    \begin{align*}
        \mu^{f,g}_{\psi,\theta}(\E) \leq 1000^{P_{\psi,\theta}(\ep)}(\mu^f_\psi(V)\mu^g_\theta(W)e^{-Kt})^{\tfrac{1}{2}+\ep}.
    \end{align*}
\end{lemma}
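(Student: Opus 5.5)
This lemma is quoted from \cite[Theorem~1.7]{hauke2024provingduffinschaefferconjecturegcd}, so the intended proof is a citation. If I had to reconstruct the argument, I would follow the GCD-graph iteration of Koukoulopoulos--Maynard, as abstracted in the cited framework. I would encode $\E$ as a weighted bipartite graph $G=(\mu^f_\psi,\mu^g_\theta,V,W,\E)$ and attach to it, for each prime $p$, the exponent data $e_p,f_p$ recording the $p$-adic valuations common to all of $V$ (respectively $W$). The quantity to control is a ``quality''
\begin{align*}
    q(G)=\left(\frac{\mu(\E)}{(\mu(V)\mu(W))^{1/2+\ep}}\right)^{c}\prod_{p}(\text{local correction factors}),
\end{align*}
chosen to be multiplicative in the data. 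The target inequality is then equivalent to showing $q(G)\le 1000^{P_{\psi,\theta}(\ep)}e^{-Kt(1/2+\ep)}$, after the factor $e^{-Kt}$ has been built into the quality.

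The iteration runs prime by prime. For each prime $p$ dividing some $vw$, I restrict $V$ and $W$ to the subsets $V_i=\set{v\in V}{p^i\| v}$ and $W_j$ of fixed $p$-adic valuation. I then show that some pair $(V_i,W_j)$ gives a subgraph whose quality is at least that of $G$, up to a factor of at most $1000$ for small primes $p\le p_0(\ep)$ and with no loss for large $p$. The hypotheses $(1\star f)(n)\le n$ and $(1\star g)(n)\le n$ enter exactly here: they make the masses $\mu^f(V_i)$ decay like $p^{-i}$ uniformly, which is what drives the Cauchy--Schwarz or AM--GM step comparing $\sum_{i,j}\mu(\E_{ij})$ with $(\sum_i\mu(V_i))^{1/2+\ep}(\sum_j\mu(W_j))^{1/2+\ep}$. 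Running this over all primes yields a terminal graph $G^*$ in which all $v\in V^*$ share the same $\gcd$ structure, and likewise for $W^*$. The losses accumulate only at the small primes, which produces the factor $1000^{P_{\psi,\theta}(\ep)}$.

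For the terminal graph I use $D_{\psi,\theta}\le 1$. This forces $\psi(v)\le (v,w)/w$ for every edge, and since $(v,w)$ is essentially fixed on $G^*$, the masses $\mu(V^*)$ and $\mu(W^*)$ are bounded by a counting argument. The condition $L_t(v,w)\ge K$ adds that the primes $p>t$ where $v$ and $w$ differ contribute a factor $\prod(1+1/p)^{-1}$, roughly $e^{-K}$. The condition should instead yield a bound with $e^{-Kt}$, which would come from a weighting by $p$-adic differences at primes $p>t$ in which each such prime costs $\ll 1/p$ relative mass. The precise mechanism needs to be checked against the statement of \cite{hauke2024provingduffinschaefferconjecturegcd}.

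I expect the main obstacle to be the large-prime step. There one needs a quality increment with no constant loss, uniformly in $p$, while still extracting the factor $e^{-Kt}$ from the primes $p>t$. Following Koukoulopoulos--Maynard, I would split into the case where the graph is ``unbalanced'' at $p$, which gives a gain, and the case where it is balanced, which loses only $1+O(p^{-1-\ep})$. I would then absorb the product of the balanced losses into the exponent $\tfrac12+\ep$ and the constant.
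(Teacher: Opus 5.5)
Citing \cite[Theorem~1.7]{hauke2024provingduffinschaefferconjecturegcd} is exactly what the paper does. The lemma is quoted and no argument is given, so as a proof your first sentence is all that is needed. The reconstruction you sketch, however, is not a proof, and it misses the one idea that makes the statement nontrivial: where the factor $e^{-Kt}$ comes from.

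Your heuristic that the separating primes $p>t$ contribute $\prod(1+1/p)^{-1}\approx e^{-K}$ loses the factor $t$ entirely, and you leave the replacement mechanism unchecked. The saving actually comes from the constraint $D_{\psi,\theta}\le 1$. On an edge it gives $\psi(v)\le (v,w)/w$ and $\theta(w)\le (v,w)/v$. Since $f(v)\le (1\star f)(v)\le v$, and likewise for $g$, one gets
\[
\mu^f_\psi(v)\,\mu^g_\theta(w)\le \frac{(v,w)^2}{vw}\le \prod_{p\in S}\frac{1}{p},
\]
where $S$ is the set of primes dividing $vw/(v,w)^2$. So each separating prime costs a full factor $p$ in edge mass, not a probability $1/p$. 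For $p>t$ one has $p\log p\ge t\log\max(2,t)$. Hence $L_t(v,w)\ge K$ forces $\prod_{p\in S}p\ge \max(2,t)^{Kt}$, which is an $e^{-cKt}$-type saving. A genuine proof has to carry this per-prime gain through the prime-by-prime step while paying for the entropy of the possible sets $S$; your sketch does neither. In Koukoulopoulos--Maynard's quality function, the factors $p^{\lvert e_p-f_p\rvert}$ are what record this gcd penalty.

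Two smaller points. First, the hypothesis $(1\star f)(n)\le n$ cannot make $\mu^f_\psi(V_i)$ decay like $p^{-i}$. The function $\psi$ is arbitrary; it could, for instance, be supported on multiples of $p^{10}$. What the hypothesis supplies is the local substitute $\sum_{j\le i}f(p^j)\le p^i$ for $\sum_{d\mid n}\varphi(d)=n$. This controls the weights $f(p^i)/p^i$ carried by the $p$-adic pieces, and the final mass count.

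Second, the framework you cite is not an abstraction of the GCD-graph iteration. The paper says it employs that framework \emph{instead} of the GCD graphs used in \cite{koukoulopoulos2024sharpquantitativeversionduffinschaeffer}. Your Koukoulopoulos--Maynard-style iteration would therefore be a different route from the cited one. Its large-prime step, which you rightly flag as the crux, is still to be carried out.
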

\begin{lemma}\cite[Theorem~1.7]{vazquez2024almostsharpquantitativeduffinschaeffergcd}\label{lem:bilinear-e3}
    Let $\ep\in(0,2/5]$ and $C>0$. Then there exists $p_0(\ep,C)>0$ such that the following holds. Let $\map{\psi,\theta}{\N}{[0,\infty)}$ be finitely supported, with $V_\psi=\supp \psi$, $W_\theta = \supp \theta$, and:
    \begin{align*}
        \mathcal{P}_{\psi,\theta}\coloneq \set{p}{\exists (v,w)\in V_\psi\times W_\theta \text{ such that } p\mid vw}.
    \end{align*}
    Let $P_{\psi,\theta}(\ep,C)\coloneq p_0(\ep)+\lvert \mathcal{P}_{\psi,\theta}\cap[1,p_0(\ep,C)]\rvert$. Let $\map{f,g}{\N}{[0,\infty)}$ be multiplicative functions such that:
    \begin{align*}
        (1\star f)(n)\leq n, \text{ and } (1\star g)(n)\leq n, \text{ for all }n\geq 1.
    \end{align*}
    Suppose that $\mathcal{F} \subset \mathcal{F}^{t,K}_{\psi,\theta}\cap (V\times W)$. Then for all $t\geq 1$ and $K\in \R$, we have:
    \begin{align*}
        \mu^{f,g}_{\psi,\theta}(\mathcal{F}) \leq (100e^C)^{P_{\psi,\theta}(\ep,C)}(\Log t)^{\tfrac{1}{2}(e^{40C}-1)}(\mu^f_\psi(V)\mu^g_\theta(W)e^{-CK})^{\tfrac{1}{2}+\ep}.
    \end{align*}
\end{lemma}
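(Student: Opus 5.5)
Since this is the bilinear estimate for a large \emph{number} of small differing primes, the plan is to follow the GCD graph iteration of \cite{koukoulopoulos2020duffin, hauke2024provingduffinschaefferconjecturegcd}, with the argument of Lemma~\ref{lem:bilinear-e12} as the template. The $L_t$ condition is replaced by the counting condition $\omega_t(v,w)\geq K$.

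\textbf{Setup and quality.} Encode the problem as a weighted bipartite graph $G=(\mu,V,W,\mathcal F,\mathcal P,(e_p,f_p)_{p\in\mathcal P})$. Here the vertex weights are $\mu^f_\psi$ on $V$ and $\mu^g_\theta$ on $W$, and the edge weight is $\mu^{f,g}_{\psi,\theta}$. The data $(e_p,f_p)$ records primes $p\in\mathcal P$ at which every $v\in V$ satisfies $p^{e_p}\Vert v$ and every $w\in W$ satisfies $p^{f_p}\Vert w$. Define the quality
\begin{align*}
 q(G)\coloneq \frac{\mu(\mathcal F)^{2/(1+2\ep)}}{\mu(V)\mu(W)}\prod_{\substack{p\in\mathcal P\\ e_p\neq f_p}} e^{C}\Bigl(1-\frac{\mathds{1}_{p\le t}}{p}\Bigr)^{-c(C)}\prod_{\substack{p\in \mathcal P\\ e_p=f_p}}\Bigl(1-\frac{1}{p^{1+\ep/10}}\Bigr)^{-1},
\end{align*}
or a close variant of it. The exponent $2/(1+2\ep)$ is chosen so that a bound $q(G)\ll 1$ at the end unwinds to $\mu(\mathcal F)\ll (\mu(V)\mu(W))^{1/2+\ep}$. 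The factor $e^{C}$ per differing prime is what eventually produces the saving $e^{-CK}$.

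\textbf{Iteration.} Starting from $G_0$ with $\mathcal P=\varnothing$, process primes one at a time. For a prime $p$, partition $V$ and $W$ according to the $p$-adic valuations, and pass to the subgraph $G'$ indexed by a pair $(k,\ell)$ whose quality is comparable to $q(G)$. The core lemma is a convexity/Cauchy--Schwarz estimate. It uses the hypothesis $(1\star f)(n)\le n$, which gives $\sum_{k}\mu(V_{p^k})\le\mu(V)$ with the correct local densities. From this one shows that some choice $(k,\ell)$ keeps the quality, up to a factor $1+O(p^{-1-\ep/10})$ when $k=\ell$. When $k\neq\ell$ the bookkeeping factor $e^{C}(1-1/p)^{-c(C)}$ is paid; this is possible because the differing-valuation edges carry a density of order at most $p^{-1}$ relative to $\mu(V)\mu(W)$.

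Primes $p\le p_0(\ep,C)$ are the ones where this gain is not guaranteed. For these we simply accept a loss of at most $100e^C$ each, which yields the factor $(100e^C)^{P_{\psi,\theta}(\ep,C)}$. For $p_0<p\le t$ the factor $(1-1/p)^{-c(C)}$ with $c(C)\approx e^{40C}/2$ accumulates to
\begin{align*}
\prod_{p\le t}(1-1/p)^{-c(C)}\ll(\Log t)^{\frac12(e^{40C}-1)}.
\end{align*}
We iterate until every prime that is ``unbalanced'' in the graph has been fixed. For edges in $\mathcal F^{t,K}$, at least $K$ primes $p\le t$ must have $e_p\neq f_p$, and each contributes the factor $e^{C}$. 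Hence $q(G_{\mathrm{final}})\geq q(G_0)\,e^{CK}\,(\text{losses})^{-1}$.

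\textbf{Terminal graph and main obstacle.} In the final graph, bound the quality trivially. Fix $v$; the neighbours $w$ satisfy $D_{\psi,\theta}(v,w)\le1$ with $\gcd(v,w)$ essentially determined by the fixed valuations. Summing $\mu^g_\theta(w)$ over $w$ with $w\theta(v)\ldots\le (v,w)$, and using $(1\star g)\le \mathrm{id}$ again, gives $\mu(\mathcal F)\le\min(\mu(V),\mu(W))$ up to the prime-product weights. This yields $q\ll1$, and rearranging gives the stated inequality.

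The main obstacle is the single-prime step for $p\le t$ with differing valuations. One must show the quality gains a full $e^{C}$, rather than merely not dropping, at the cost of only $(1-1/p)^{-O(e^{40C})}$, uniformly in $C$. I would first try the three-case analysis of \cite{koukoulopoulos2020duffin}: either restrict to $p\nmid$-unbalanced edges, or use that a non-negligible mass of $\mathcal F$ lies on a single pair $(k,\ell)$ with $k\neq\ell$. The exponent $2/(1+2\ep)>1$ in the quality then gives room to absorb the constant $e^{C}$ once $p>p_0(\ep,C)$.
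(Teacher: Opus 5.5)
The paper does not prove this lemma. It quotes \cite[Theorem~1.7]{vazquez2024almostsharpquantitativeduffinschaeffergcd}, which is proved in the framework of \cite{hauke2024provingduffinschaefferconjecturegcd}; the paper adopts that framework precisely as an alternative to GCD graphs. A GCD-graph proof would be a legitimate different route (\cite{koukoulopoulos2024sharpquantitativeversionduffinschaeffer} treat the $\varphi$-weighted case that way). However, your sketch has a genuine gap exactly where this lemma differs from Lemma~\ref{lem:bilinear-e12}: nothing in it produces the factor $e^{-CK}$, as your last paragraph concedes, and the proposed mechanism fails under either reading.

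\emph{First reading.} Suppose $e^{C}$ per unbalanced prime is a bookkeeping factor in $q$. Then $q(G_0)$, where $\mathcal P=\varnothing$, contains no $e^{CK}$. The iteration gives only $q(G_{\mathrm{final}})\ge q(G_0)/(\text{losses})$, not $q(G_{\mathrm{final}})\gtrsim e^{CK}q(G_0)$. So the terminal estimate would have to show that $\mu(\mathcal F)^{2/(1+2\ep)}/(\mu(V)\mu(W))\ll e^{-C(\#\{p\in\mathcal P:\,e_p\neq f_p\}+K)}$. Your terminal bound $\mu(\mathcal F)\le\min(\mu(V),\mu(W))$ is blind to unbalanced primes. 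It does not even give $q\ll 1$ by itself, since $2/(1+2\ep)<2$; you would also need $\mu(\mathcal F)\le\mu(V)\mu(W)$.

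\emph{Second reading.} Suppose you mean that the density ratio must grow by $e^{C}$ whenever an unbalanced prime is fixed. That is false. If $p\Vert v$ for all $v\in V$ and $p\nmid w$ for all $w\in W$, the block $(1,0)$ is the whole graph, and fixing $p$ changes nothing.

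The missing idea is that the saving at an unbalanced prime is forced by the constraint $D_{\psi,\theta}\le 1$, not by the iteration. Let $V_k$, $W_\ell$, $\mathcal F_{k,\ell}$ be the pieces with $p^k\Vert v$, $p^\ell\Vert w$, and take $k>\ell$. Writing $v=p^kv_1$, $w=p^\ell w_1$, one has $D_{\psi,\theta}(v,w)=D_{\psi_1,\theta_1}(v_1,w_1)$, where $\psi_1(v_1)=\psi(p^kv_1)$ and $\theta_1(w_1)=p^{k-\ell}\theta(p^\ell w_1)$. Also $\omega_t(v,w)=\omega_t(v_1,w_1)+\mathds{1}_{p\le t}$. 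Transport a bound for this reduced problem, which has one prime fewer and threshold $K-\mathds{1}_{p\le t}$, with constant $M$. This gives
\[
\mu^{f,g}_{\psi,\theta}(\mathcal F_{k,\ell})\le M\Bigl(\frac{f(p^k)g(p^\ell)}{p^{2k}}\Bigr)^{\frac12-\ep}e^{C(\frac12+\ep)\mathds{1}_{p\le t}}\bigl(\mu^f_\psi(V_k)\mu^g_\theta(W_\ell)e^{-CK}\bigr)^{\frac12+\ep},
\]
and $f(p^k)g(p^\ell)p^{-2k}\le p^{-(k-\ell)}$ since $(1\star f)\le\mathrm{id}$ and $(1\star g)\le\mathrm{id}$.

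Two consequences follow, and neither appears in your sketch:
\begin{itemize}
\item The loss $e^{C(1/2+\ep)}$ is repaid by $p^{-(k-\ell)(1/2-\ep)}$ only once $p$ exceeds a threshold depending on $C$ and $\ep$. This is the true source of $p_0(\ep,C)$ and of $(100e^C)^{P_{\psi,\theta}(\ep,C)}$.
\item Carrying a Rankin-type weight $e^{C}$ per small unbalanced prime through the recombination of the blocks costs a factor of the shape $1+O_C(1/p)$ at each $p\le t$. This is what $(\Log t)^{\frac12(e^{40C}-1)}\asymp_C\prod_{p\le t}\bigl(1+\frac{e^{40C}-1}{2p}\bigr)$ pays for.
\end{itemize}
Your $c(C)\approx e^{40C}/2$ is read off from the statement rather than derived. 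The three-case analysis you propose would not by itself supply the missing saving.
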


From here, we can construct the three key bilinear bounds, which can be seen as $k$-dimensional generalisations of Propositions 7.1-3 of \cite{koukoulopoulos2024sharpquantitativeversionduffinschaeffer}.
\begin{proposition}\label{proposition:overlap-sum-1}
    Fix $\ep\in (0,1)$, $k\geq 2$, let $\map{\psi}{\N}{[0,\infty)}$, and $y\geq 1$. Then:
    \begin{align*}
        \sum_{\substack{1\leq q,r\leq Q,\\D(q,r)\leq y}}\left(\frac{\varphi(q)\psi(q)}{q}\right)^k\left(\frac{\varphi(r)\psi(r)}{r}\right)^k\ll_{\ep,k} y^{1-\ep}\Psi_k(Q)^{1+\ep}.
    \end{align*}
\end{proposition}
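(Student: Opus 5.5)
The plan is to reduce the sum to Lemma~\ref{lem:bilinear-e12}, after a rescaling of $\psi$ by $y$ and a dyadic decomposition in the size of $\psi$. Throughout I take the paper's standing assumption $\psi\leq 1/2$ and truncate $\psi$ to $[1,Q]$, so it is finitely supported.

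\emph{Choice of $f$ and $\theta$.} Take
\[
f=g=\varphi^k/\mathrm{id}^{k-1},\qquad\text{i.e. } f(n)=\varphi(n)^k/n^{k-1}.
\]
This $f$ is multiplicative and satisfies $0\le f\le\varphi$. Hence $(1\star f)(n)\le (1\star\varphi)(n)=n$, which is the hypothesis of Lemma~\ref{lem:bilinear-e12}. We choose $t=1$ and $K=0$, so the condition $L_t\ge K$ is vacuous. With $\theta=\psi/y$ we have $D_{\theta,\theta}(q,r)=D(q,r)/y$, so the constraint $D(q,r)\le y$ becomes exactly $D_{\theta,\theta}\le 1$.

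The difficulty is the weights. Lemma~\ref{lem:bilinear-e12} controls $\sum (\varphi(q)/q)^k\theta(q)(\varphi(r)/r)^k\theta(r)$, which is linear in $\psi$, whereas we need $\psi(q)^k\psi(r)^k$.

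\emph{Dyadic decomposition.} Split $\mathrm{supp}\,\psi$ into the blocks
\[
B_i=\{q\le Q:\ 2^{-i-1}<\psi(q)\le 2^{-i}\},\qquad i\ge 1,
\]
which are all the blocks since $\psi\le 1/2$. Let $\Psi^{(i)}$ denote the contribution of $B_i$ to $\Psi_k(Q)$. For a pair of blocks $(i,j)$ we apply Lemma~\ref{lem:bilinear-e12} with $\psi$ replaced by $\theta_i=(\psi/y)\ind{B_i}$ and $\theta_j=(\psi/y)\ind{B_j}$. On $B_i\times B_j$ we have
\[
\psi(q)^k\psi(r)^k\asymp_k y^2\,2^{-(k-1)(i+j)}\,\theta_i(q)\theta_j(r),
\]
and
\[
\mu^f_{\theta_i}(B_i)=\sum_{q\in B_i}\Big(\frac{\varphi(q)}{q}\Big)^k\frac{\psi(q)}{y}\asymp_k \frac{2^{i(k-1)}\Psi^{(i)}}{y}.
\]
The lemma then bounds the $(i,j)$ contribution by
\[
\ll y^2 2^{-(k-1)(i+j)}\Big(\frac{2^{(k-1)(i+j)}\Psi^{(i)}\Psi^{(j)}}{y^2}\Big)^{1/2+\ep'}
= y^{1-2\ep'}\,2^{-(k-1)(i+j)(1/2-\ep')}\big(\Psi^{(i)}\Psi^{(j)}\big)^{1/2+\ep'}.
\]
The constant $1000^{P(\ep')}$ is $O_{\ep'}(1)$, since $P(\ep')\le 2p_0(\ep')$ independently of $\psi$.

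\emph{Summing over blocks.} Since $k\ge 2$ and $\ep'<1/2$, the factor $2^{-(k-1)(i+j)(1/2-\ep')}$ is summable over $i,j\ge1$. Bounding $\Psi^{(i)},\Psi^{(j)}\le\Psi_k(Q)$ then gives a total of
\[
\ll_{\ep',k} y^{1-2\ep'}\Psi_k(Q)^{1+2\ep'}.
\]
Taking $\ep'=\min(\ep/2,2/5)$ yields $y^{1-\ep}\Psi_k(Q)^{1+\ep}$ when $\ep\le 4/5$; for larger $\ep$ the bound follows from the $\ep=4/5$ case, provided $\Psi_k(Q)\gg y^{-1}$. If instead $\Psi_k(Q)<1/y$, the statement follows by monotonicity in $\ep$, or trivially because the sum is $\le\Psi_k(Q)^2$. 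The diagonal terms $q=r$ are included in Lemma~\ref{lem:bilinear-e12}, since $D_{\theta,\theta}(q,q)=\psi(q)/y\le 1$ there. For the same reason, membership in $\E^{1,0}$ needs nothing beyond $D_{\theta,\theta}\le 1$.

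\emph{Main obstacle.} The key step is the mismatch between the linear-in-$\psi$ measure of the bilinear lemma and the $k$-th power weights. The dyadic splitting resolves it, and it works precisely because $k\ge2$ produces the decaying factor $2^{-(k-1)(i+j)(1/2-\ep')}$. That decay in turn relies on $\psi\le 1/2$; without this bound, blocks with large $\psi$ (up to $y$) would cost an extra power of $y$. So I would check carefully that this normalisation is available wherever the proposition is applied.
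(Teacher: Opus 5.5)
Your argument is correct up to one small slip, noted in the second paragraph, but it takes a more roundabout route than the paper.

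The paper uses the same $f=\varphi^k/\mathrm{id}^{k-1}$, but it puts the $k$-th power into the weight function, taking $\tilde\psi(q)=\ind{q\le Q}\psi(q)^k/y$. Then $f(q)\tilde\psi(q)/q=y^{-1}(\varphi(q)\psi(q)/q)^k$ exactly. Since $\psi\le 1$ gives $\psi^k\le\psi$, one also has $D_{\tilde\psi,\tilde\psi}(q,r)\le D(q,r)/y$. Lemma~\ref{lem:bilinear-e12} only requires the inclusion $\E\subset\E^{t,K}_{\tilde\psi,\tilde\psi}$, so the condition $D\le y$ suffices. A single application of the lemma with $\ep/2$ then gives $y^2\bigl(2^{-2k}\Psi_k(Q)^2/y^2\bigr)^{1/2+\ep/2}\ll y^{1-\ep}\Psi_k(Q)^{1+\ep}$. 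In other words, the weight mismatch you identify as the main obstacle disappears once the lemma's weight is $\psi^k/y$ rather than $\psi/y$.

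Your choice $\theta=\psi/y$ keeps $D_{\theta,\theta}=D/y$ exact, at the price of a dyadic decomposition. Your blockwise estimate, the $\asymp_k$ comparisons, and the summation of $2^{-(k-1)(i+j)(1/2-\ep')}$ over $i,j\ge 1$ are all correct. They make explicit where $k\ge 2$ and the normalisation of $\psi$ enter; the paper uses that normalisation only silently, through $\psi^k\le\psi$. You are right that some such bound is genuinely needed. Take $\psi$ supported on $q=1$ with $\psi(1)=y$ large: the left-hand side is $y^{2k}$, while the right-hand side is $\asymp_{k,\ep} y^{k+1+\ep(k-1)}$, which is smaller for $k\ge2$. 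The normalisation $\psi\le 1/2$ is available in the paper's setting.

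The slip is in your treatment of $\ep\in(4/5,1)$. The right-hand side equals $y\Psi_k(Q)(\Psi_k(Q)/y)^{\ep}$. This is nondecreasing in $\ep$ only when $\Psi_k(Q)\ge y$, not when $\Psi_k(Q)\gg y^{-1}$; for $\Psi_k(Q)<y$ the monotonicity runs the wrong way. The fix is to split at $\Psi_k(Q)=y$. Above that threshold, the $\ep=4/5$ case implies the rest. Below it, the trivial bound $2^{-2k}\Psi_k(Q)^2\le y^{1-\ep}\Psi_k(Q)^{1+\ep}$ does the job. The paper does not address this range at all: it applies the lemma with $\ep/2$, which may exceed the permitted $2/5$. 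So your extra care here is worthwhile once the threshold is corrected.
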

\begin{proof}
    Let $f(n)=g(n)=(\varphi(n))^k/n^{k-1}$, $\Tilde{\psi}(q)=\ind{q\leq Q}\psi(q)^k/y$. Then $f$ is multiplicative, and:
    \begin{align*}
        (1\star f)(n)=\sum_{d\mid n}f(d)=\sum_{d\mid n}\varphi(d)\left(\frac{\varphi(d)}{d}\right)^{k-1} \leq \sum_{d\mid n}\varphi(d)=n.
    \end{align*}
    Moreover, we have:
    \begin{align*}
        D_{\Tilde{\psi},\Tilde{\psi}}(q,r)= \lcm(q,r)\max\left(\frac{\psi(q)^k}{yq},\frac{\psi(r)^k}{yr}\right)\leq \lcm(q,r)\max\left(\frac{\psi(q)}{yq},\frac{\psi(r)}{yr}\right)=\frac{D(q,r)}{y}\leq 1.
    \end{align*}
    Defining $\E=\set{(q,r)\in [1,Q]^2}{D(q,r)\leq y}$, we thus have $\E\subset \E^{0,0}_{\Tilde{\psi},\Tilde{\psi}}$, and applying Lemma~\ref{lem:bilinear-e12} gives:
    \begin{align*}
        \sum_{(q,r)\in\E}\left(\frac{\psi(q)\varphi(q)}{q}\right)^k\left(\frac{\psi(r)\varphi(r)}{r}\right)^k &= y^2\mu^{f,f}_{\Tilde{\psi},\Tilde{\psi}}(\E)\\
        &\ll_{\varepsilon,k} y^2\left(\frac{1}{y^2}\mu^f_{\Tilde{\psi}}(\{q\leq Q\})^2\right)^{\tfrac{1}{2}+\tfrac{\ep}{2}}\\
        &= y^{1-\ep}\Psi_k(Q)^{1+\ep}.\qedhere
    \end{align*}
\end{proof}
\begin{proposition}\label{proposition:overlap-sum-2}
    Fix $\ep\in (0,1)$, $k\geq 2$ and $C\geq 1$. Let $\map{\psi}{\N}{[0,\infty)}$, and $y,t,s\geq 1$. Then:
    \begin{align*}
        \sum_{\substack{1\leq q,r\leq Q,\\D(q,r)\leq y,\\L_t(q,r)\geq 1/s}}\left(\frac{\varphi(q)\psi(q)}{q}\right)^k\left(\frac{\varphi(r)\psi(r)}{r}\right)^k\ll_{\ep,k,C} \me^{-Ct/s} y^{1-\ep}\Psi_k(Q)^{1+\ep}.
    \end{align*}
    \begin{proof}
    Let $f(n)=g(n)=(\varphi(n))^k/n^{k-1}$, $\Tilde{\psi}(q)=\ind{q\leq Q}\psi(q)^k/y$. Then following the proof of Proposition~\ref{proposition:overlap-sum-1} and defining $\E=\set{(q,r)\in [1,Q]^2}{D(q,r)\leq y,\ L_t(q,r)\geq \frac{1}{s}}$, we have $\E\subset \E^{t,1/s}_{\Tilde{\psi},\Tilde{\psi}}$, and applying Lemma~\ref{lem:bilinear-e12} gives:
    \begin{align*}
        \sum_{(q,r)\in\E}\left(\frac{\psi(q)\varphi(q)}{q}\right)^k\left(\frac{\psi(r)\varphi(r)}{r}\right)^k &= y^2\mu^{f,f}_{\Tilde{\psi},\Tilde{\psi}}(\E)\\
        &\ll_{\varepsilon,k} y^2\left(\frac{1}{y^2}\mu^f_{\Tilde{\psi}}(\{q\leq Q\})^2e^{-t/s}\right)^{\tfrac{1}{2}+\tfrac{\ep}{2}}\\
        &\ll_{\ep,k,C} e^{-Ct/s}y^{1-\ep}\Psi_k(Q)^{1+\ep}.\qedhere
    \end{align*}
    \end{proof}
\end{proposition}
\begin{proposition}\label{proposition:overlap-sum-3}
    Fix $\ep\in (0,1)$, $k\geq 2$, $\kappa> 0$ and $C\geq 1$. Let $\map{\psi}{\N}{[0,\infty)}$, and $y,t\geq 1$. Then:
    \begin{align*}
        \sum_{\substack{1\leq q,r\leq Q,\\D(q,r)\leq y,\\\omega_t(q,r)\geq\kappa\log t}}\left(\frac{\varphi(q)\psi(q)}{q}\right)^k\left(\frac{\varphi(r)\psi(r)}{r}\right)^k\ll_{\ep, k, \kappa, C} t^{-C}y^{1-\ep}\Psi_k(Q)^{1+\ep}.
    \end{align*}
    \begin{proof}
        Let $f(n)=g(n)=(\varphi(n))^k/n^{k-1}$, $\Tilde{\psi}(q)=\ind{q\leq Q}\psi(q)^k/y$. Again following the proof of Proposition~\ref{proposition:overlap-sum-1}, now defining $\mathcal{F}=\set{(q,r)\in [1,Q]^2}{D(q,r)\leq y,\ \omega_t(q,r)\geq \kappa\log t}$, we have $\mathcal{F}\subset \mathcal{F}^{t,\kappa \log t}_{\Tilde{\psi},\Tilde{\psi}}$, and applying Lemma~\ref{lem:bilinear-e3} gives:
        \begin{align*}
            \sum_{(q,r)\in\mathcal{F}}\left(\frac{\psi(q)\varphi(q)}{q}\right)^k\left(\frac{\psi(r)\varphi(r)}{r}\right)^k &= y^2\mu^{f,f}_{\Tilde{\psi},\Tilde{\psi}}(\mathcal{F})\\
        &\ll_{\ep,k,C} y^2\left(\frac{1}{y^2}\mu^f_{\Tilde{\psi}}(\{q\leq Q\})^2e^{-C\kappa\log t}\right)^{\tfrac{1}{2}+\tfrac{\ep}{2}}\\
        &\ll_{\ep,k,\kappa,C} t^{-C}y^{1-\ep}\Psi_k(Q)^{1+\ep}.\qedhere
        \end{align*}
    \end{proof}
\end{proposition}

% --------------------
\section{Variance estimate}\label{sec:variance}

Finally, we prove the variance estimate, and thus conclude the proof of our main result. The proof follows \cite{koukoulopoulos2024sharpquantitativeversionduffinschaeffer} with minor modifications.
\begin{proof}[Proof of Theorem~\ref{theorem:k-dim-variance}]
    Note that we have:
    \begin{align*}
        \lint{\left[0,1\right]^k}{}{\left(S_k(\alpha, Q)-\Psi_k(Q)\right)^2}{\alpha}= \sum_{q,r\leq Q} \lambda_k(\A_q^k \cap \A_r^k) - \Psi_k(Q)^2,
    \end{align*}
    and so we will show instead that:
    \begin{align*}
        \sum_{q,r\leq Q} \lambda_k(\A_q^k \cap \A_r^k) \leq \Psi_k(Q)^2 + \Psi_k(Q) + O_{\ep,k}(\Psi_k(Q)^{1+\ep}).
    \end{align*}
    When $q=r$, we have simply $\sum_{q\leq Q}\lambda_k(\A_q^k) = \Psi_k(Q)$. Thus, consider the sum of terms $q\neq r$, and assume that $D=D(q,r)\geq 1/2$. We consider the contributions over the following partition:
    \begin{align*}
        \E^{(1)} &\coloneq\set{(q,r)\in \left[1,Q\right]^k}{L_{D^2}(q,r)\leq \frac{1}{D},\ \omega_{D^2}(q,r)\leq \frac{\ep}{4k}\log(2D)},\\
        \E^{(2)} &\coloneq\set{(q,r)\in \left[1,Q\right]^k}{L_{D^2}(q,r)> \frac{1}{D}},\\
        \E^{(3)} &\coloneq\set{(q,r)\in \left[1,Q\right]^k}{L_{D^2}(q,r)\leq \frac{1}{D},\ \omega_{D^2}(q,r)> \frac{\ep}{4k}\log(2D)}.
    \end{align*}
    First, consider the sum over $\E^{(1)}$. Then applying Lemma~\ref{lem:k-dim-overlap} with $t=D^2$ gives:
    \begin{align*}
        \lambda_k(\A_q^k\cap\A_r^k) &\leq \lambda_k(\A_q^k)\lambda_k(\A_r^k)\me^{2k/D}\left(1+O\left(\frac{2^{\ep\log(2D)/4}\log(4D)}{D}\right)\right)\\
        &\leq \lambda_k(\A_q^k)\lambda_k(\A_r^k)\left(1+O_k(D^{-1+\ep/2})\right)
    \end{align*}
    Therefore:
    \begin{align*}
        \sum_{(q,r)\in\E^{(1)}}\lambda_k(\A_q^k\cap\A_r^k) &\leq \sum_{(q,r)\in\E^{(1)}}\lambda_k(\A_q^k)\lambda_k(\A_r^k) + O_k\left(\sum_{(q,r)\in\E^{(1)}}\lambda_k(\A_q^k)\lambda_k(\A_r^k)D^{-1+\ep/2}\right)\\
        &\leq \Psi_k(Q)^2+O_k\left(\sum_{(q,r)\in\E^{(1)}}\lambda_k(\A_q^k)\lambda_k(\A_r^k)D^{-1+\ep/2}\right).
    \end{align*}
    Focusing on the error term, define $R\coloneq\sum_{(q,r)\in\E^{(1)}}\lambda_k(\A_q^k)\lambda_k(\A_r^k)D^{-1+\ep/2}$, and let $I$ be the smallest non-negative integer such that $2^I\geq \Psi_k(Q)$. We then split $R$ via dyadic partition on $D$, writing $R_i$ as the part of the sum with $D\in [2^{i-1},2^i)$, and $R'$ the part with $D\geq 2^I$. Then:
    \begin{align*}
        R'&\leq \Psi_k(Q)^{-1+\ep/2}\sum_{(q,r)\in\E^{(1)}}\lambda_k(\A_q^k)\lambda_k(\A_r^k) \leq \Psi_k(Q)^{1+\ep/2},
    \end{align*}
    and for $i\in \{0,1,\dots,I\}$ we have by Proposition~\ref{proposition:overlap-sum-1} with $y=2^i$ that:
    \begin{align*}
        R_i &\leq (2^{i-1})^{-1+\ep/2}\sum_{(q,r)\in\E^{(1)}}\lambda_k(\A_q^k)\lambda_k(\A_r^k)\\
        &\ll_{\ep,k} (2^i)^{-1+\ep/2}(2^i)^{1-\ep}\Psi_k(Q)^{1+\ep}\\
        &= 2^{-i\epsilon/2}\Psi_k(Q)^{1+\ep}.
    \end{align*}
    We thus have $R \ll_{\ep,k}\Psi_k(Q)^{1+\ep}$ and so:
    \begin{align*}
        \sum_{(q,r)\in\E^{(1)}}\lambda_k(\A_q^k\cap\A_r^k) \leq \Psi_k(Q)^k + O_{\ep,k}(\Psi_k(Q)^{1+\ep}).
    \end{align*}
    Considering $\E^{(2)}$, we use Lemma~\ref{lem:pv-overlap} to say:
    \begin{align*}
        \sum_{(q,r)\in\E^{(2)}}\lambda_k(\A_q^k\cap\A_r^k) &\ll_k \sum_{(q,r)\in\E^{(2)}}\lambda_k(\A_q^k)\lambda_k(\A_r^k)\me^{kL_D(q,r)}.
    \end{align*}
    If $D\in [2^{i-1},2^i)$, we have $L_{4^{i-1}}(q,r)\geq L_{D^2}(q,r)>1/D\geq 2^{-i}$. We then write:
    \begin{align*}
        \E^{(2)}=\bigcup_{i=0}^\infty \bigcup_{j=i}^\infty \E^{(2)}_{i,j},
    \end{align*}
    where:
    \begin{align*}
        \E^{(2)}_{i,i}&\coloneq\set{(q,r)\in \E^{(2)}}{D(q,r)\in[2^{i-1},2^i),\ L_{4^{i-1}}(q,r)\leq 1},\text{ and}\\
        \E^{(2)}_{i,j}&\coloneq\set{(q,r)\in \E^{(2)}}{D(q,r)\in[2^{i-1},2^i),\ L_{4^{j-1}}(q,r)\leq 1<L_{4^{j-2}}(q,r)},\text{ for }j>i.
    \end{align*}
    Then, if $(q,r)\in\E^{(2)}_{i,j}$, we have:
    \begin{align*}
        L_D(q,r)\leq L_{2^{i-1}}(q,r)\leq \sum_{2^{i-1}< p\leq 4^{j-1}}\frac{1}{p}+L_{4^{j-1}}(q,r) \leq \log\frac{j+1}{i+1} + O(1),
    \end{align*}
    and so:
    \begin{align*}
        \sum_{(q,r)\in\E^{(2)}}\lambda_k(\A_q^k\cap\A_r^k) \ll_k \sum_{j\geq i\geq 0}\left(\frac{j+1}{i+1}\right)^k\sum_{(q,r)\in\E^{(2)}_{i,j}}\lambda_k(\A_q^k)\lambda_k(\A_r^k).
    \end{align*}
    Then for $i\geq 0$, Proposition~\ref{proposition:overlap-sum-2} with $y=2^i$, $t=\max\{1, 4^{i-1}\}$, $s=2^i$ and $C=8$ gives:
    \begin{align*}
        \sum_{(q,r)\in\E^{(2)}_{i,i}}\lambda_k(\A_q^k)\lambda_k(\A_r^k) &\ll_{\ep,k} \exp(-2^i)\Psi_k(Q)^{1+\ep}.
    \end{align*}
    Similarly, for $j>i$, choosing $y=2^i$, $t=\max\{1, 4^{j-2}\}$, $s=1$ and $C=17$ gives:
    \begin{align*}
        \sum_{(q,r)\in\E^{(2)}_{i,j}}\lambda_k(\A_q^k)\lambda_k(\A_r^k) \ll_{\ep,k} \exp(-4^j)\Psi_k(Q)^{1+\ep}.
    \end{align*}
    Together, this gives:
    \begin{align*}
        \sum_{(q,r)\in\E^{(2)}}\lambda_k(\A_q^k\cap\A_r^k) &\ll_{\ep,k} \Psi_k(Q)^{1+\ep}\left(\sum_{i\geq 0}\exp(-2^i) +\sum_{j>i}\left(\frac{j+1}{i+1}\right)^k\exp(-4^j)\right)\\
        &\ll_{\ep,k} \Psi_k(Q)^{1+\ep}.
    \end{align*}
    Finally, considering $\E^{(3)}$, we have that:
    \begin{align*}
        L_D(q,r)\leq \sum_{D<p\leq D^2}\frac{1}{p} + L_{D^2}(q,r) \ll \log\log D^2-\log\log D + \frac{1}{D} \ll 1.
    \end{align*}
    We thus have by Lemma~\ref{lem:pv-overlap} that:
    \begin{align*}
        \sum_{(q,r)\in\E^{(3)}}\lambda_k(\A_q^k\cap\A_r^k) &\ll_k \sum_{(q,r)\in\E^{(3)}}\lambda_k(\A_q^k)\lambda_k(\A_r^k).
    \end{align*}
    If $D\in [2^{i-1},2^i)$, we have:
    \begin{align*}
        \omega_{4^i}(q,r)\geq \omega_{D^2}(q,r)&> \frac{\ep}{4k}\log(2D) \geq \frac{\ep}{8k}\log(4^i).
    \end{align*}
    % \todoMH[]{What is the purposse of the last inequality?}
    We can thus write:
    \begin{align*}
        \sum_{(q,r)\in\E^{(3)}}\lambda_k(\A_q^k\cap\A_r^k) &\ll_k \sum_{i\geq 0}\sum_{(q,r)\in\E^{(3)}_i}\lambda_k(\A_q^k)\lambda_k(\A_r^k),
    \end{align*}
    where, for each $i\geq 0$:
    \begin{align*}
        \E^{(3)}_{i}&\coloneq\set{(q,r)\in \E^{(3)}}{D(q,r)\in[2^{i-1},2^i),\ \omega_{4^i}(q,r)> \frac{\ep}{8k}\log(4^i)}.
    \end{align*}
    Applying Proposition~\ref{proposition:overlap-sum-3} with $y=2^i$, $t=4^i$, $\kappa=\ep/8k$ and $C=2$ gives:
    \begin{align*}
        \sum_{(q,r)\in\E^{(3)}_i}\lambda_k(\A_q^k)\lambda_k(\A_r^k) &\ll_{\ep,k} (4^i)^{-2}(2^i)^{1-\ep}\Psi_k(Q)^{1-\ep} \ll_\ep 2^{-i}\Psi_k(Q)^{1+\ep},
    \end{align*}
    and so:
    \begin{align*}
        \sum_{(q,r)\in\E^{(3)}}\lambda_k(\A_q^k\cap\A_r^k) &\ll_{\ep,k} \sum_{i\geq 0}2^{-i}\Psi_k(Q)^{1+\ep} \ll_\ep\Psi_k(Q)^{1+\ep}.
    \end{align*}
    Putting everything together:
    \begin{align*}
        \sum_{q,r\leq Q}\lambda_k(\A_q^k\cap\A_r^k) &= \sum_{q\leq Q}\lambda_k(\A_q^k) + \sum_{i=1}^3\sum_{(q,r)\in\E^{(i)}}\lambda_k(\A_q^k\cap\A_r^k)\\
        &\leq \Psi_k(Q) + \Psi_k(Q)^2 + O_{\ep,k}(\Psi_k(Q)^{1+\ep}),
    \end{align*}
    as required.
\end{proof}

% --------------------
\printbibliography

\end{document}